\documentclass[12pt,a4paper]{article}
\usepackage{latexsym,amssymb,amsfonts,amsmath,amsthm,nccmath,float,enumitem,setspace,bm,authblk,graphicx,arydshln}
\usepackage[usenames,dvipsnames]{xcolor}
\usepackage[hidelinks]{hyperref}
\usepackage[margin=1.86cm]{geometry}
\usepackage{booktabs}

\hypersetup{
    colorlinks,
    linkcolor={blue!80!black},
    citecolor={blue!80!black},
    urlcolor={blue!80!black}
}

\newtheorem{Theorem}{Theorem}[section]
\newtheorem{Proposition}{Proposition}[section]

\newtheorem{Lemma}{Lemma}[section]

\def \leq {\leqslant}
\def \geq {\geqslant}

\let\oldproofname=\proofname
\renewcommand{\proofname}{\rm\bf{\oldproofname}}

\numberwithin{equation}{section}

\allowdisplaybreaks

\begin{document}

\title{Even cycle decomposition thresholds for dense multipartite graphs}

\author[a]{Weiyi Sun}
\author[a,b]{Tao Feng \thanks{Supported by NSFC under Grant 12271023}}
\author[a]{Shikang Yu}
\affil[a]{School of Mathematics and Statistics, Beijing Jiaotong University, Beijing, 100044, P.R. China}
\affil[b]{Hebei Provincial Key Laboratory of Mathematical Theory and Analysis for Network and Data Science, Beijing Jiaotong University, Beijing, 100044, P.R. China}
\renewcommand*{\Affilfont}{\small\it}
\renewcommand\Authands{ and }

\affil[ ]{vesunn@bjtu.edu.cn, tfeng@bjtu.edu.cn, healthyu@bjtu.edu.cn}
\date{}

\maketitle

\begin{abstract}
  Let $r\geq 2$ be an integer. An $r$-partite graph $\Gamma$ with vertex partition $V_1,\ldots,V_r$ is $2$-balanced if there exists a positive integer $n$ such that $n\leq |V_i|\leq 2n$ for every $i\in[r]$. For an integer $\ell\geq3$, let $C_\ell$ denote the cycle of length $\ell$, and define  $\hat{\delta}(\Gamma)=\min\{d_\Gamma(v,V_i)/|V_i|:i\in[r],\ v\in V(\Gamma)\setminus V_i\}$. Let $\hat\delta^r_{C_\ell}$ denote the $C_\ell$-decomposition threshold for $2$-balanced $r$-partite graphs, that is, the least nonnegative real number $\delta$ such that, for every $\varepsilon>0$,
  there exists $n_0$ such that every $C_\ell$-divisible $2$-balanced $r$-partite graph $G$ with $\min_i|V_i|>n_0$ and $\hat\delta(G)\geq\delta+\varepsilon$ admits a $C_\ell$-decomposition. We prove that $\hat\delta^{r}_{C_4}=\frac{2}{3}$ and $\hat\delta^{r}_{C_{2k}}=\frac{1}{2}$ for every $r\geq2$ and every $k\geq3$.
\end{abstract}

\noindent {\bf Keywords}: cycle decomposition; dense multipartite graph; minimum partite degree

\section{Introduction}

All graphs considered here are finite, undirected and simple unless otherwise specified. Let $\Gamma$ be a graph. Denote by $V(\Gamma)$ the vertex set of $\Gamma$ and by $E(\Gamma)$ the edge set of $\Gamma$. Let $\gcd(\Gamma)$ be the greatest common divisor of the degrees of all vertices of $\Gamma$. For $v \in V(\Gamma)$ and $U \subseteq V(\Gamma)$, let $N_\Gamma(v,U):=\{u\in U:uv\in E(\Gamma)\}$, and let $d_\Gamma(v,U)=|N_\Gamma(v,U)|$.

Let $F$ be a subgraph of a graph $G$. The subgraphs of $G$ isomorphic to $F$ are referred to as \emph{copies} of $F$. A classical question in graph theory is whether the edges of $G$ can be partitioned into edge-disjoint copies of $F$; such a partition of $E(G)$ is called an \emph{$F$-decomposition} of $G$. $G$ is said to be \emph{$F$-divisible} if $|E(F)|$ divides $|E(G)|$ and $\gcd(F)$ divides $\gcd(G)$. Kirkman \cite{Kirkman} showed that every triangle-divisible complete graph admits a triangle decomposition. Wilson \cite{Wilson75} generalized this by showing that for any graph $F$, every sufficiently large $F$-divisible complete graph admits an $F$-decomposition.

A further problem is to find $F$-decompositions in $F$-divisible graphs with high minimum degree. In 1970, Nash-Williams \cite{Nash} conjectured that every sufficiently large triangle-divisible graph on $n$ vertices with minimum degree at least $\frac{3}{4}n$ admits a triangle decomposition, and very recently, Delcourt and Postle \cite{DP26} confirmed this conjecture. The \emph{$F$-decomposition threshold} $\delta_F$ is the infimum of all nonnegative real numbers $\delta$ such that, for all sufficiently large integers $n$, every $F$-divisible $n$-vertex graph with minimum degree at least $\delta n$ admits an $F$-decomposition. Equivalently, $\delta_F$ is the least nonnegative real number $\delta$ with the property that for every $\varepsilon>0$, there exists $n_0$ such that every $F$-divisible graph on $n\geq n_0$ vertices with minimum degree at least $(\delta+\varepsilon)n$ admits an $F$-decomposition. A fundamental problem in graph decomposition theory is to determine the $F$-decomposition threshold for every graph $F$ (cf. \cite[Problem~3.2]{GKO}). For an integer $\ell \geq3$, let $C_\ell$ denote the cycle of length $\ell$. Bryant, Dukes, Horsley, Maenhaut, and Montgomery \cite{BDHMM} established the bounds $\frac{1}{2}+\frac{1}{2\ell-2} \leq \delta_{C_\ell}\leq \frac{1}{2}+\frac{1}{2\ell-4}$ for all odd $\ell \geq 5$. Granet and Horsley \cite[Conjecture~1.3]{GH} conjectured that $\delta_{C_\ell}=\frac{1}{2}+\frac{1}{2\ell-2}$ for all odd $\ell \geq 3$, and proved the conjecture for all odd $\ell\geq 73$. For even cycles, Barber, K\"{u}hn, Lo, and Osthus \cite{BKLO} showed that $\delta_{C_4}=\frac{2}{3}$ and $\delta_{C_\ell}=\frac{1}{2}$ for all even $\ell\geq 6$.

This paper focuses on cycle decompositions of multipartite graphs. Let $r\geq 2$ be an integer, and let $[r]:=\{1,2,\ldots,r \}$. Let $\Gamma$ be an $r$-partite graph with vertex partition $V_1,\ldots,V_r$. Let
$$\hat{\delta}(\Gamma):=\min\left\{\frac{d_\Gamma(v,V_i)}{|V_i|}:i\in[r],\ v\in V(\Gamma)\setminus V_i\right\}.$$
We say that $\Gamma$ is \emph{$2$-balanced} if there exists a positive integer $w$ such that $w\leq |V_i|\leq 2w$ for every $i\in[r]$. For any integer $\ell\geq 3$, the \emph{$C_{\ell}$-decomposition threshold for $2$-balanced $r$-partite graphs}, denoted by $\hat\delta^{r}_{C_{\ell}}$, is the least nonnegative real number $\delta$ with the property that for every $\varepsilon>0$, there exists $n_0$ such that every $C_{\ell}$-divisible $2$-balanced $r$-partite graph $G$ satisfying $\min_{i\in[r]}|V_i|>n_0$ and $\hat{\delta}(G)\geq \delta+\varepsilon$ admits a $C_{\ell}$-decomposition.

For any integer $k\geq 2$, let
$$
\alpha_k:=
\begin{cases}
\frac{2}{3}, & k=2;\\[2mm]
\frac{1}{2}, & k\geq 3.
\end{cases}
$$
Taylor \cite[Theorem 1.3 and Proposition 2.2]{Taylor} proved the following result.

\begin{Theorem}{\rm \cite{Taylor}} \label{thm:Taylor}
$\hat\delta^{2}_{C_{2k}} = \alpha_k$ for all integers~$k\geq2$.
\end{Theorem}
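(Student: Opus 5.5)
Theorem~\ref{thm:Taylor} is quoted from Taylor \cite{Taylor}, so the plan is to reconstruct its two halves: an extremal construction for the lower bound $\hat\delta^2_{C_{2k}}\geq\alpha_k$, and the absorption/iterative-absorption method of Barber, K\"uhn, Lo and Osthus \cite{BKLO}, adapted to the bipartite setting, for the upper bound.

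\emph{Lower bound.} For $k\geq3$, I will take two disjoint copies of a balanced complete bipartite graph $K_{m,m}$. Each vertex sees only about half of the opposite class, so $\hat\delta\approx\frac12$, and the graph has no $C_{2k}$-decomposition. Divisibility is arranged by choosing $m$ even and making the edge count divisible by $2k$, perturbing slightly if necessary. The disconnected graph itself fails because the edge count of each component is not divisible by $2k$; alternatively, I would use a near-disconnected variant with a small bridge structure whose edges cannot be covered.

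For $k=2$, I need a bipartite graph with $\hat\delta$ close to $\frac23$ that is $C_4$-divisible but has no $C_4$-decomposition. The natural candidate, following \cite{BKLO}, is an extremal example built by blowing up a small gadget. Let each side be split into three parts $A_1,A_2,A_3$ and $B_1,B_2,B_3$, and join $A_i$ to $B_j$ completely unless $i=j$. Then every vertex has relative degree $\frac23$. Next I will modify this graph so that a counting argument fails: for each $C_4$ covering an edge inside a chosen set, compare the number of edges it uses in the ``sparse'' part against the total there. The aim is a set $S$ of edges such that every $C_4$ meets $S$ in an even number of edges (or in at most some proportion of its edges), while $|S|$ is incompatible with this.

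\emph{Upper bound.} The first step is to show that the fractional $C_{2k}$-decomposition threshold for bipartite graphs is at most $\alpha_k$, or that one can absorb via short cycles. For $k\geq3$ the fractional threshold is small, so the real constraint is connectivity and absorbers. For $C_4$, the fractional threshold in bipartite graphs needs $\hat\delta>\frac23$, via a weighting over $C_4$'s through each edge using common-neighbourhood counts.

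The second step is iterative absorption: take a vortex $V_1\supseteq U_1\supseteq\cdots\supseteq U_\ell$ of random balanced subsets inheriting the degree conditions, and cover all edges outside $U_\ell$ using the fractional decomposition together with Haxell--R\"odl. The cover-down step relies on $\hat\delta\geq\alpha_k+\varepsilon$ to route leftover edges into $U_{i+1}$.

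The final step absorbs the remaining $C_{2k}$-divisible subgraph on $U_\ell$ of bounded size, using absorbers built from transformers. This requires every divisible bipartite graph on $U_\ell$ to be transformable into cycles, which holds since bipartite graphs with even degrees and even cycles are flexible.

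The main obstacle is the cover-down step for $C_4$ at exactly the $\frac23$ threshold. Here edges between $V\setminus U$ and $U$ must be covered by $C_4$'s whose other two vertices lie in $U$, which needs common neighbourhoods of size linear in $|U|$; this is exactly what $\frac23$ guarantees. I would handle it by the degree-balancing trick in \cite{BKLO}, applied to both sides of the bipartition.
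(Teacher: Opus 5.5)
The paper gives no proof of this theorem. It is quoted from Taylor and its upper bound is used as a black box, while the lower bound is recovered as the case $r=2$ of Proposition~\ref{prop:lower}.

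Your lower bound for $k\geq3$ agrees with that construction, once the ``perturbation'' is made precise. With $m$ even you cannot in general have $2m^2\equiv0$ but $m^2\not\equiv0\pmod{2k}$; for odd $k$ this never happens. So you really must delete even cycles of lengths $a\equiv m^2-2$ and $b\equiv m^2+2\pmod{2k}$, one in each component, to force the edge count of one component to be $\equiv2\pmod{2k}$.

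For $k=2$ your base graph, the blow-up of $K_{3,3}$ minus a perfect matching (that is, of $C_6$), is the right one, but you stop before producing a counterexample. Two points are missing:
\begin{enumerate}
\item[(a)] Since $C_6$ has girth $6$, every $C_4$ of the blow-up maps onto an edge or a $2$-edge path of $C_6$. Hence it uses $0$, $2$ or $4$ edges of any single blown-up pair, say $S=E(A_1,B_2)$.
\item[(b)] You must delete something that makes $|S|$ odd while keeping all degrees even and $4\mid e(G)$. For instance, with $m$ even, delete the $8$-cycle $a_1b_2a_3b_1a_2b_3a_1'b_2'a_1$, which goes once round the $C_6$ and meets $S$ in three edges.
\end{enumerate}

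The genuine gap is in the upper bound: you put $\frac23$ in the wrong step and wave away the step that needs it. Your cover-down for $C_4$ uses only that two vertices of the same part have linearly many common neighbours. That already holds for $\hat\delta\geq\frac12+\gamma$, with at least $2\gamma|V_j|$ of them, exactly as used in Lemmas~\ref{lift_4cycle_to_C2k} and~\ref{lem:realization}. Nothing you wrote shows that the fractional step needs $\frac23$ either: the extremal graph is a parity obstruction, which says nothing about fractional decompositions.

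The threshold enters in the absorption. You assert that every divisible leftover on $U_\ell$ can be transformed into cycles because bipartite graphs with even degrees are ``flexible'', and this is false without a degree hypothesis. Take the perturbed $C_6$-blow-up above. Every $C_4$-decomposable $A\subseteq G$ meets $S$ evenly, and $|S|$ is odd, so the final leftover $H$ meets $S$ oddly. Then $A\cup H$ is never $C_4$-decomposable, so absorbers cannot exist there.

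What you must prove is that for $\hat\delta\geq\frac23+\gamma$ the $C_4$'s of $G$ can be assembled into switchers or transformers realising every $C_4$-divisible leftover. This degree condition gives any three vertices of a part at least $3\gamma|V_j|$ common neighbours, which is what breaks the parity invariant. For $k\geq3$ you need the analogous statement for $C_{2k}$ above $\frac12$, where the obstruction is disconnection rather than parity. This lattice/transformer argument is the part of the proof that actually uses $\alpha_k$, and your proposal only asserts it.
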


In this paper, we determine the $C_{2k}$-decomposition thresholds for $2$-balanced $r$-partite \mbox{graphs}, extending Taylor's result from the bipartite case to arbitrary $r$-partite graphs.

\begin{Theorem}\label{thm:main}
$\hat\delta^{r}_{C_{2k}} = \alpha_k$ for all integers $r,k\geq2$.
\end{Theorem}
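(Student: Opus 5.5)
Two inequalities need proving: the lower bound $\hat\delta^{r}_{C_{2k}}\geq\alpha_k$ and the upper bound $\hat\delta^{r}_{C_{2k}}\leq\alpha_k$.

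\emph{Lower bound.} The plan is to lift Taylor's extremal bipartite constructions (Theorem~\ref{thm:Taylor}) to $r$ parts. Given a $C_{2k}$-divisible $2$-balanced bipartite graph $H$ with $\hat\delta(H)$ slightly below $\alpha_k$ and no $C_{2k}$-decomposition, take disjoint copies of $H$ on the part pairs $(V_1,V_2),(V_3,V_4),\dots$, and join the remaining pairs by graphs with the same local structure. The aim is that every $C_{2k}$ must use edges in a pattern that the obstruction in $H$ forbids. Two concrete obstructions would be used:
\begin{itemize}[nosep]
\item for $k\geq 3$, a disconnection or parity obstruction near density $1/2$ (two dense halves in each part, with an odd number of crossing edges);
\item for $k=2$, the usual ``$C_4$ needs many edges in one region'' counting obstruction at density $2/3$.
\end{itemize}
Divisibility is arranged by small edge adjustments that preserve $\hat\delta$ up to $o(1)$. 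Alternatively, one can just take $V_1,V_2$ to carry Taylor's graph and put nearly complete graphs on all other pairs. I would try the simpler approach first: make the obstruction global, e.g.\ a cut argument that counts edges across a vertex bipartition in every part pair, showing that every $C_{2k}$ uses an even number of cut edges while the total number of cut edges is odd.

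\emph{Upper bound.} I would follow the iterative absorption framework of Barber--K\"uhn--Lo--Osthus as adapted by Taylor.
\begin{enumerate}[nosep]
\item Show that the fractional $C_{2k}$-decomposition threshold for $r$-partite graphs is at most $\alpha_k$. For even cycles this follows from a rerouting or averaging argument, since $C_{2k}$ is bipartite and the condition $\hat\delta\geq 1/2+\varepsilon$ gives many short paths between any two edges.
\item Convert the fractional decomposition to an approximate decomposition via regularity or the R\"odl nibble.
\item Use a vortex: a nested sequence of vertex subsets chosen with the random partition preserving $\hat\delta$ in each part.
\item Cover leftover edges inside each vortex level using cycles through the next level.
\item Finish with absorbers for the final small set, which is the step needing the threshold $\alpha_k$ exactly.
\end{enumerate}

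\emph{Main obstacle.} The absorbers are the hardest step. One needs, for every $C_{2k}$-divisible leftover graph $L$ on the final vortex set, an exact decomposition of $L\cup A$ and of $A$. The approach is to decompose $L$ into ``transformers'' that switch an edge $xy$, with $x\in V_i$ and $y\in V_j$, to an edge between some canonical pair of parts. This reduces the problem to the bipartite case within a fixed pair of parts, where Taylor's absorbers apply.

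The key difficulty is showing that any $C_{2k}$-divisible graph can be transformed into one supported on a single pair $(V_1,V_2)$ while keeping divisibility. For $k=2$, the $2/3$ density is needed to find the required common neighbourhoods, such as $K_{2,2}$ gadgets across three parts. For $k\geq3$, the degree condition $\hat\delta\geq 1/2+\varepsilon$ gives connectivity of the ``edge-switching'' auxiliary graph across all part pairs. This uses $r$-partite analogues of Taylor's bipartite lemmas, which I would reprove pair by pair.
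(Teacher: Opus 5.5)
\textbf{Lower bound.} Your lower bound has no working construction, and both concrete mechanisms you suggest fail. A $C_{2k}$-divisible graph has all degrees even. So every $S\subseteq V(G)$ satisfies $|E_G(S,\overline{S})|=\sum_{v\in S}d_G(v)-2|E(G[S])|\equiv0\pmod 2$, where $\overline{S}:=V(G)\setminus S$. Hence no cut can have odd size, including ``two halves with an odd number of crossing edges'', and the cut-parity argument you would try first cannot work.

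Putting Taylor's extremal graph on $(V_1,V_2)$ and nearly complete graphs on the other pairs also fails, because cycles may leave $G[V_1,V_2]$. For $k\geq3$, a $C_{2k}$ made of a path of length $2k-2$ inside one component of $G[V_1,V_2]$, closed up through $V_3$, uses $2k-2\not\equiv0\pmod{2k}$ edges of that component. This destroys the residue obstruction.

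Your first idea, giving all pairs the same structure, is the right one, but it has to be carried out. For $C_4$ the invariant must count edges \emph{inside} a set, not across a cut. The paper does this as follows.
\begin{itemize}
\item For $k\geq3$: split each $V_i=A_i\cup B_i$ into halves and completely join $A_i$ to $A_j$ and $B_i$ to $B_j$ for all $i<j$. Then $G=G[A]\cup G[B]$ is disconnected. Delete short even cycles from $K_{A_1,A_2}$ and $K_{B_1,B_2}$ so that $2k\mid|E(G)|$ but $|E(G[A])|\equiv2\pmod{2k}$.
\item For $k=2$: split $V_i=A_i\cup B_i\cup C_i$ into thirds, with $A$--$A$, $A$--$B$, $B$--$C$, $C$--$C$ edges between different parts. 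Since $E_G(A,C)=\varnothing$ and $G[B]$ is empty, every $C_4$ has $0$, $2$ or $4$ edges in $G[A]$. Deleting a $C_8$ with exactly three edges in $G[A]$ keeps $C_4$-divisibility but makes $|E(G[A])|$ odd.
\end{itemize}

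\textbf{Upper bound.} Your upper bound is a programme rather than a proof. Every step where the threshold matters (cover-down, $r$-partite absorbers, the ``transformers'') is only asserted. You rightly sense that the crux is divisibility and that the bipartite case should be reused. What you miss is that Taylor's bipartite theorem can be applied as a black box to \emph{every} pair. Each $G[V_i,V_j]$ is $2$-balanced with $\hat\delta\geq\alpha_k+\varepsilon$. So it suffices to delete a $C_{2k}$-decomposable $H$ with $\hat\Delta(H)=O(1)$ such that each pair becomes individually divisible: even partite degrees and $2k\mid|E_{G-H}(V_i,V_j)|$. Nothing has to be moved to a single pair.

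The paper builds $H$ greedily in three steps.
\begin{enumerate}
\item Let $Q$ be the graph on $[r]$ of pairs with $|E_G(V_i,V_j)|$ odd. It has even degrees and an even number of edges, since $d_Q(i)\equiv\sum_{v\in V_i}d_G(v)$ and $|E(Q)|\equiv|E(G)|$. So $Q$ is empty if $r=3$, and for $r\geq4$ it is a symmetric difference of at most $\binom{r-1}{2}$ $4$-cycles of $K_{[r]}$. Each $4$-cycle $p_1p_2p_3p_4$ is lifted to a $C_{2k}$ with $2k-3,1,1,1$ edges on its four pairs, the lifts being vertex-disjoint.
\item For each $t$, let $P_t$ be the bipartite graph between $V_t$ and $[r]\setminus\{t\}$ recording odd partite degrees. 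It has even degrees; the $[r]$-side is even by step 1. Decompose $P_t$ into cycles $v_1h_1\cdots v_\ell h_\ell v_1$, and write each as the symmetric difference of the $4$-cycles $v_1h_qv_{q+1}h_{q+1}v_1$. Realise each by a $C_{2k}$ of the form $xz_1\cdots z_{2k-3}yz_{2k-2}x$, with $x=v_1$ and $y=v_{q+1}$, whose auxiliary vertices have both neighbours in one part. Then exactly the partite degrees of $x,y$ into $V_{h_q},V_{h_{q+1}}$ flip.
\item Fix residues mod $2k$ along a spanning tree of $L(K_{[r]})$, from the leaves to the root. Use such realisations, which put $2$ or $2k-2$ edges on a pair and the rest on its parent pair, with each basic vertex used an even number of times. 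The root pair is then forced by $2k\mid|E(G)|$.
\end{enumerate}
All these gadgets need only $\hat\delta\geq\frac12+\gamma$ (common neighbourhoods of size $2\gamma n$). So the multipartite part of the problem never requires $2/3$ for $k=2$; that value enters solely through Taylor's bipartite theorem.
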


\section{Extremal graphs}\label{sec:ex-graph}

In this section, we present extremal constructions that give the lower bound $\hat\delta^{r}_{C_{2k}} \geq \alpha_k$. We first introduce some notation that will be used below.

For disjoint sets $A$ and $B$, let $K_{A,B}$ and $K_A$ denote the complete bipartite graph with bipartition $(A,B)$ and the complete graph on vertex set $A$, respectively. For a graph $\Gamma$, write $\Gamma[U]$ for the subgraph induced by $U\subseteq V(\Gamma)$, and write $\Gamma-H$ for
the graph obtained from $\Gamma$ by deleting the edges of a subgraph $H$. For disjoint subsets $X,Y\subseteq V(\Gamma)$, let
$$E_\Gamma(X,Y):=\{xy\in E(\Gamma):x\in X,y\in Y\}$$ and let $\Gamma[X,Y]$ denote the bipartite subgraph of $\Gamma$ with vertex bipartition $(X,Y)$ and edge set $E_{\Gamma}(X,Y)$.

\begin{Proposition}\label{prop:lower}
Let $r\geq 2$ be an integer.
\begin{enumerate}
  \item[\textup{(1)}] For every sufficiently large even integer $m$, there exists a $C_4$-divisible $r$-partite graph $G$ with vertex partition $V_1,\ldots,V_r$ satisfying $|V_i|=3m$ for every $i\in[r]$, such that
  $$\min\left\{d_G(v,V_i):i\in[r],\ v\in V(G)\setminus V_i\right\}=2m-2,$$
  and $G$ has no $C_4$-decomposition. In particular, $\hat{\delta}(G)=\frac{2m-2}{3m}$.

  \item[\textup{(2)}] Let $k\geq 3$ be an integer. For every sufficiently large even integer $m$, there exists a $C_{2k}$-divisible $r$-partite graph $G$ with vertex partition $V_1,\ldots,V_r$ satisfying $|V_i|=2m$ for every $i\in[r]$, such that
  $$\min\left\{d_G(v,V_i):i\in[r],\ v\in V(G)\setminus V_i\right\}=m-2,$$
  and $G$ has no $C_{2k}$-decomposition. In particular, $\hat{\delta}(G)=\frac{m-2}{2m}$.
\end{enumerate}
Consequently, $\hat\delta^{r}_{C_4}\geq\frac{2}{3}$ and $\hat\delta^{r}_{C_{2k}}\geq\frac{1}{2}$ for every $k\geq 3$.
\end{Proposition}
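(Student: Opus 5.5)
For both parts I would use an obstruction of the form: every copy of $C_{2k}$ in $G$ uses an even number of edges from a specified edge set $S$, but $|S|$ is odd. This rules out a $C_{2k}$-decomposition, because the copies in a decomposition would cover $S$ with an even total. The work is then to make $G$ $C_{2k}$-divisible (all degrees even and $2k \mid |E(G)|$) while keeping the minimum partite degree at the stated value. The last sentence of the proposition follows because $\frac{2m-2}{3m}\to\frac23$ and $\frac{m-2}{2m}\to\frac12$, and $|V_i|$ can be taken arbitrarily large.

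\textbf{Part (1).} Split each $V_i$ as $A_i\cup B_i$ with $|A_i|=m$ and $|B_i|=2m$. Put $A=\bigcup_i A_i$ and $B=\bigcup_i B_i$.
\begin{itemize}
\item Make $A$ independent.
\item Join each $a\in A_i$ to $2m-2$ vertices of $B_j$ for every $j\neq i$, so $d_G(a,V_j)=2m-2$.
\item Join each $b\in B_i$ to all of $A_j$ and to about $m-2$ or more vertices of $B_j$ for every $j\ne i$, so $d_G(b,V_j)\geq 2m-2$.
\end{itemize}
The edges of the form $ab$ come from a bipartite graph that is nearly complete minus a near-regular structure. The edges inside $B$ come from a near-regular $r$-partite graph, whose exact degrees we adjust by $O(1)$ to fix parities.

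Since $A$ is independent, every cycle meets the $A$--$B$ cut in an even number of edges, counting two at each visit to $A$. For $C_4$ this leaves only the types with $4$, $2$ or $0$ cut edges, hence $0$, $2$ or $4$ edges inside $B$. So every $C_4$ uses an even number of edges of $S=E(G[B])$.

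We then choose the degrees so that:
\begin{itemize}
\item every vertex has even total degree; vertices of $A$ have degree $(r-1)(2m-2)$, which is even;
\item $|E(G[B])|$ is odd;
\item $|E(G)|\equiv 0\pmod 4$.
\end{itemize}

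\textbf{Part (2).} I would use the same template with $|A_i|=|B_i|=m$. Join $a\in A_i$ to $m-2$ vertices of $B_j$ and to all of $A_j$ for $j\ne i$. Join $B$ to $A$ and internally so that every partite degree is at least $m-2$.

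Parity of cut edges gives nothing extra for longer cycles. Instead I would make $G$ the disjoint union of two dense components, namely the $r$-partite graphs on $\bigcup_i A_i$ and on $\bigcup_i B_i$, with partite degrees $m-2$ on each side. Each component then has partite degree about $m-2$ into $V_j$, using only its own half. The components are chosen with even degrees and with edge counts $\equiv k \pmod{2k}$ each, so the total is $\equiv 0\pmod{2k}$. Every cycle lies in one component, so a decomposition would force $2k\mid |E(\text{component})|$, a contradiction. The same disconnected idea cannot work for (1), where the density is $2/3$, which is why part (1) uses the cut-parity argument instead.

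\textbf{Main obstacle.} The hard part is the explicit number-theoretic adjustment. We need near-regular $r$-partite graphs with prescribed even degrees, for instance $m-2$ or $m-1$ on prescribed subsets, realising prescribed edge counts modulo $4$ or $2k$. I would start from a regular $(m-2)$-factor between each pair of parts, which exists since $m$ is large and even. Then I would fix the residues by deleting or adding a bounded number of edge-disjoint short cycles. Each such cycle changes the edge count by a chosen amount while preserving degree parities, and it lowers degrees by at most $2$, which we compensate by starting from degree $m$ where necessary.
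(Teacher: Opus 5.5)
\textbf{Part (1) has a fatal gap.} The construction cannot be realised, because your parity conditions contradict each other. If $A$ is independent, every edge at a vertex of $A$ goes to $B$, so
\[
|E(G[B])| = |E(G)| - \sum_{a\in A} d_G(a).
\]
If all degrees are even and $|E(G)|\equiv 0\pmod 4$, the right-hand side is even. So your three requirements (even degrees, $4\mid |E(G)|$, $|E(G[B])|$ odd) cannot hold together.

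The underlying problem is that your invariant is a consequence of divisibility. The same identity gives $|E(F[B])|\equiv |E(F)|\pmod 2$ for every even-degree subgraph $F$. This is also why your cut-counting yields $\ell-2|V(C)\cap A|$ edges in $B$ for every cycle of length $\ell$: it works for every even cycle, not only $C_4$. Such an invariant can therefore never separate a $C_4$-divisible graph from a $C_4$-decomposable one. (Were it valid, it would equally forbid $C_{2k}$-decompositions at density $\frac23$ for every $k$, contradicting $\hat\delta^r_{C_{2k}}=\frac12$ for $k\geq 3$.)

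What you need is an invariant that genuinely uses the length $4$, with a set $S$ whose parity is not forced by the degrees. The paper does this as follows.
\begin{itemize}
\item Split each $V_i$ into $A_i,B_i,C_i$ of size $m$. Between different parts use only the complete $A$--$A$, $A$--$B$, $B$--$C$ and $C$--$C$ connections, and take $S=E(G[A])$.
\item Suppose a $C_4$ has exactly two vertices in $A$ and they are adjacent. Its other two vertices must then lie in $B$, since there are no $A$--$C$ edges. They would also be adjacent, which is impossible because $B$ is independent.
\item The remaining cases are automatic, so every $C_4$ meets $S$ in $0$, $2$ or $4$ edges.
\item Because $G[A]$ is not independent, the parity of $|S|$ is free. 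Deleting the $8$-cycle $a_1a_2b_1c_2c_1b_2a_1'a_2'a_1$, which has exactly three edges in $A$, makes $|S|=m^2\binom r2-3$ odd. All degrees stay even, $|E(G)|\equiv 0\pmod 4$, and the minimum partite degree is exactly $2m-2$.
\end{itemize}

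\textbf{Part (2) is essentially the paper's argument, but your residue choice needs fixing.} You use two components on $\bigcup_iA_i$ and $\bigcup_iB_i$ whose edge counts are nonzero modulo $2k$ but sum to $0$. Taking residue $k$ for both components breaks when $r=2$ and $k$ is odd. Each component is then bipartite with all degrees even, so its edge count equals $\sum_{a\in A_1}d_G(a)$ and is even. Use residues $2$ and $-2$ instead, as the paper does. Start from the complete $r$-partite graphs on the two halves, and delete a cycle of suitable even length $a$ from $K_{A_1,A_2}$ and one of length $b$ from $K_{B_1,B_2}$, with $4\le a,b\le 2k+2$. This also makes the minimum partite degree exactly $m-2$.
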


\begin{proof}
(1) Let $m$ be a sufficiently large even integer. Let $V_1,\dots,V_r$ be disjoint vertex sets of size $3m$. For every $i\in[r]$, partition $V_i$ into $A_i,B_i$, and $C_i$, each of size $m$. Let $H$ be an $r$-partite graph with vertex partition $V_1,\ldots,V_r$, and
$$E(H)=\bigcup_{1\leq i< j\leq r} \left( E(K_{A_i,A_j}) \cup E(K_{A_i,B_j}) \cup E(K_{B_i,A_j}) \cup E(K_{B_i,C_j}) \cup E(K_{C_i,B_j}) \cup E(K_{C_i,C_j}) \right).$$
Choose distinct vertices $a_1,a_1'\in A_1$ and $a_2,a_2'\in A_2$, and choose $b_i\in B_i$ and $c_i\in C_i$, $i=1,2$. Let $Q:=a_1a_2b_1c_2c_1b_2a'_1a'_2a_1$ be a copy of $C_8$ in $H$. Let $G=H-Q$. It is readily checked that the degree of each vertex of $G$ is even and that
$$\min\left\{d_G(v,V_i):i\in[r],\ v\in V(G)\setminus V_i\right\}=2m-2.$$
Moreover, $|E(G)|=|E(H)|-|E(Q)|=6m^2\binom{r}{2}-8$, which is divisible by $4$. Hence, $G$ is $C_4$-divisible.

Let $A=\bigcup_{i=1}^r A_i$, $B=\bigcup_{i=1}^r B_i$, and $C=\bigcup_{i=1}^r C_i$. Let $R$ be an arbitrary copy of $C_4$ in $G$. If $|V(R)\cap A|\in \{0,1\}$, then $|E(R)\cap E(G[A])|=0$. If $|V(R)\cap A|=3$, then $|E(R)\cap E(G[A])|=2$. If $|V(R)\cap A|=4$, then $|E(R)\cap E(G[A])|=4$. If $|V(R)\cap A|=2$, then since $E_G(A,C)=\varnothing$ and $E(G[B])=\varnothing$, the two vertices of $R$ in $A$ cannot be adjacent, and hence $|E(R)\cap E(G[A])|=0$. Therefore, every copy of $C_4$ in $G$ contains an even number of edges in $G[A]$. Since $|E(G[A])|=|E(H[A])|-3=m^2\binom {r}{2}-3$ is odd, there is no $C_4$-decomposition of $G$.

(2) Let $m$ be a sufficiently large even integer so that $2m\geq 2k+2$. Let $V_1,\dots,V_r$ be disjoint vertex sets of size $2m$. For every $i\in[r]$, partition $V_i$ into $A_i$ and $B_i$, each of size $m$. Let $H$ be an $r$-partite graph with vertex partition $V_1,\ldots,V_r$, and
$$E(H)=\bigcup_{1\leq i< j\leq r} ( E(K_{A_i,A_j}) \cup E(K_{B_i,B_j}) ).$$
Choose even integers $a,b$ with $4\leq a,b\leq 2k+2$ such that $a\equiv m^2\binom{r}{2}-2\pmod{2k}$ and $b\equiv m^2\binom{r}{2}+2\pmod{2k}$. Choose a cycle $R_A$ of length $a$ in $K_{A_1,A_2}$ and a cycle $R_B$ of length $b$ in $K_{B_1,B_2}$. Let $G=H-R_A-R_B$. It is readily checked that the degree of each vertex of $G$ is even and that
$$\min\left\{d_G(v,V_i):i\in[r],\ v\in V(G)\setminus V_i\right\}=m-2.$$
Moreover,
$$ |E(G)|=|E(H)|-|E(R_A)|-|E(R_B)| =2m^2\binom{r}{2}-a-b =m^2\binom{r}{2}-a+m^2\binom{r}{2}-b,$$
which is divisible by $2k$. Hence, $G$ is $C_{2k}$-divisible.

Let $A=\bigcup_{i=1}^r A_i$ and $B=\bigcup_{i=1}^r B_i$. By construction, there are no edges between $A$ and $B$, and hence every copy of $C_{2k}$ in $G$ is contained entirely in $G[A]$ or in $G[B]$. Since $|E(G[A])| \equiv 2\pmod{2k}$, the edges of $G[A]$ cannot be partitioned into copies of $C_{2k}$. Therefore, $G$ has no $C_{2k}$-decomposition.
\end{proof}

\section{Proof of Theorem \ref{thm:main}}

The extremal constructions in Section~\ref{sec:ex-graph} give the lower bound $\hat\delta^{r}_{C_{2k}} \geq \alpha_k$. In this section, we prove the matching upper bound $\hat\delta^{r}_{C_{2k}} \leq \alpha_k$.

We first introduce some notation. Let $\Gamma$ be an $r$-partite graph with vertex partition $V_1,\ldots,V_r$, where $r\geq 2$. For $v\in V(\Gamma)$, let $d_\Gamma(v)$ denote the degree of $v$ in
$\Gamma$. For $v\in V(\Gamma)\setminus V_i$, we call $d_\Gamma(v,V_i)$ the \emph{partite degree} of $v$ into $V_i$. The \emph{maximum partite degree} of $\Gamma$ is defined by
$$\hat\Delta(\Gamma):=\max\left\{d_\Gamma(v,V_i):i\in[r],\ v\in V(\Gamma)\setminus V_i\right\}.$$
For graphs $\Gamma_1$ and $\Gamma_2$, write $\Gamma_1\cup\Gamma_2$ for the graph with vertex set $V(\Gamma_1)\cup V(\Gamma_2)$ and edge set $E(\Gamma_1)\cup E(\Gamma_2)$.

We now prove the upper bound. Let $\varepsilon>0$, and let $G$ be a sufficiently large $C_{2k}$-divisible $2$-balanced $r$-partite graph with vertex partition $V_1,\ldots,V_r$ satisfying
$\hat{\delta}(G)\geq\alpha_k+\varepsilon$. The main idea is to remove from $G$ a subgraph $H$ that admits a $C_{2k}$-decomposition so that, for every $1\leq i<j\leq r$, the bipartite graph $(G-H)[V_i,V_j]$ is $C_{2k}$-divisible and satisfies $\hat{\delta}\big((G-H)[V_i,V_j]\big)>\alpha_k+\frac{\varepsilon}{2}$. Theorem \ref{thm:Taylor} then yields a $C_{2k}$-decomposition of each $(G-H)[V_i,V_j]$, and combining these decompositions with a $C_{2k}$-decomposition of $H$ gives a $C_{2k}$-decomposition of $G$.

It remains to explain how we construct $H$. To ensure that each $(G-H)[V_i,V_j]$ is $C_{2k}$-divisible, it suffices to ensure that every partite degree is even and that the number of edges between every pair of vertex parts is divisible by $2k$. We achieve this in three steps. First, Lemma~\ref{lem:part_pair_even} gives a subgraph $H_1$ of $G$ that admits a $C_{2k}$-decomposition such that every pair of vertex parts in $G-H_1$ has an even number of edges. Next, Lemma~\ref{lem:partite_degree_even}, applied to $G-H_1$, gives a subgraph $H_2$ that admits a
$C_{2k}$-decomposition such that every partite degree of $G-H_1-H_2$ is even. Finally, Lemma~\ref{lem:pairwise_C2k_divisible}, applied to $G-H_1-H_2$, gives a subgraph $H_3$ that admits a
$C_{2k}$-decomposition such that every pair of vertex parts in $G-H_1-H_2-H_3$ has a number of edges divisible by $2k$, while all partite degrees remain even. Setting $H:=H_1\cup H_2\cup H_3$, we obtain a graph admitting a $C_{2k}$-decomposition. Moreover, the three lemmas ensure that $\hat\Delta(H)$ is bounded by a constant
independent of the sizes of the vertex parts. Consequently, if the vertex parts of $G$ are sufficiently large, then $\hat{\delta}\big((G-H)[V_i,V_j]\big)>\alpha_k+\frac{\varepsilon}{2}$ for every $1\leq i<j\leq r$, as required.

Subsection~\ref{sec:H1} is devoted to the construction of $H_1$. In Subsection~\ref{sec:H23}, we construct $H_2$ and $H_3$. Finally, Subsection~\ref{sec:main} completes the proof of Theorem~\ref{thm:main}.

\subsection{The construction of $H_1$}\label{sec:H1}

The purpose of this subsection is to prove Lemma~\ref{lem:part_pair_even}, which allows us to remove a $C_{2k}$-decomposable subgraph from $G$ such that, in the remaining graph, the number of edges between every pair of vertex parts is even.

Let $A$ and $B$ be finite sets. The \emph{symmetric difference} of $A$ and $B$ is defined by $A\triangle B:=(A\setminus B)\cup(B\setminus A)$. More generally, if $A_1,\ldots,A_m$ are finite sets, then $A_1\triangle\cdots\triangle A_m$, denoted by $\triangle_{i=1}^m A_i$, consists precisely of those elements that belong to an odd number of the sets $A_1,\ldots,A_m$.

\begin{Lemma}\label{4cycles_sym_diff}
Let $r\geq 4$, and let $Q$ be a spanning subgraph of $K_{[r]}$. Suppose that $|E(Q)| \equiv 0 \pmod 2$ and $d_Q(i)\equiv 0 \pmod 2$ for every $i\in[r]$. Then there exist pairwise distinct $4$-cycles $D_1,\ldots,D_m$ in $K_{[r]}$, with $0\leq m\leq \binom{r-1}{2}$, such that $E(Q)=E(D_1)\triangle\cdots\triangle E(D_m)$.
\end{Lemma}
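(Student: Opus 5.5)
We work in the edge space of $K_{[r]}$ over $\mathbb{F}_2$, identifying an edge set with its indicator vector. Then symmetric difference is vector addition. Let $\mathcal{E}$ be the subspace of even subgraphs, i.e.\ those with all degrees even: the cycle space. Let $\mathcal{W}\subseteq\mathcal{E}$ be the subspace of even subgraphs with an even number of edges. The claim is that $Q\in\mathcal{W}$ is a sum of at most $\binom{r-1}{2}$ distinct $4$-cycles. Representing $Q$ as a sum of distinct basis elements gives distinctness and the bound $m\le\dim$ for free. So the plan is to exhibit a set of $4$-cycles spanning $\mathcal{W}$ whose span has dimension at most $\binom{r-1}{2}$. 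The dimension count is encouraging: $\dim\mathcal{E}=\binom r2-(r-1)=\binom{r-1}{2}$, and $\mathcal{W}$ is the kernel of the nonzero functional ``number of edges mod $2$'' restricted to $\mathcal{E}$ (a triangle has odd size). Hence $\dim\mathcal{W}=\binom{r-1}{2}-1$.

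\textbf{Step 1 (triangle basis).} Use vertex $1$ as a hub. The triangles $T_{ab}=1ab$ for $2\le a<b\le r$ form a basis of $\mathcal{E}$. Indeed, an even subgraph $Q$ equals $\sum_{ab\in E(Q),\,a,b\neq1}T_{ab}$. To check this, note that the sum agrees with $Q$ on edges avoiding $1$. On an edge $1a$, the sum has coefficient $d_{Q-1}(a)\equiv d_Q(a)-[1a\in Q]\equiv[1a\in Q]\pmod 2$.

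\textbf{Step 2 (products of triangles are $4$-cycles).} Note that $T_{ab}+T_{ac}$ is the $4$-cycle $1\,b\,a\,c\,1$ when $b\ne c$. So every sum of two triangles sharing a non-hub vertex is a $4$-cycle. The space $\mathcal{W}$ consists exactly of sums of an even number of the $T_{ab}$'s, that is, of pairs of triangles. Two triangles $T_{ab},T_{cd}$ with $\{a,b\}\cap\{c,d\}=\varnothing$ can be written as $(T_{ab}+T_{bc})+(T_{bc}+T_{cd})$. Hence the $4$-cycles $T_{ab}+T_{ac}$ span $\mathcal{W}$. To get a genuine basis, fix an ordering $e_1,e_2,\dots$ of the pairs $\{a,b\}\subseteq\{2,\dots,r\}$ such that consecutive pairs intersect. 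Such a ``Gray-type'' ordering of edges of $K_{r-1}$ exists, for example any walk through the line graph of $K_{\{2,\dots,r\}}$ visiting each vertex once, i.e.\ a Hamiltonian path. Line graphs of complete graphs on at least $3$ vertices are Hamiltonian, so this is available; here $r\ge4$ is used, giving at least $3$ non-hub vertices. The consecutive sums $D_t=T_{e_t}+T_{e_{t+1}}$, $t=1,\dots,\binom{r-1}{2}-1$, are $4$-cycles. They are linearly independent, being triangular with respect to the basis $\{T_e\}$, so they are pairwise distinct and form a basis of $\mathcal{W}$.

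\textbf{Step 3 (conclusion).} Write $Q=\sum_{t\in S}D_t$ for a unique $S$. Taking the $D_t$, $t\in S$, gives pairwise distinct $4$-cycles with $m=|S|\le\binom{r-1}{2}-1\le\binom{r-1}{2}$. The case $Q$ empty gives $m=0$. The main point needing care is Step 2: producing the ordering with consecutive pairs intersecting, and confirming that the triangles' parity condition matches $|E(Q)|$ even. The latter holds because $|E(\sum_{e\in S'}T_e)|\equiv|S'|\cdot 3\equiv|S'|$, as the edge-count map is linear mod $2$. If the Hamiltonicity of the line graph felt too heavy, one could instead just use the spanning set of all $T_{ab}+T_{ac}$ and extract a basis by linear algebra, which also gives distinctness and $m\le\dim\mathcal{W}$.
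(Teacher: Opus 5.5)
Your argument is correct, and after the first step it takes a genuinely different route from the paper.

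\textbf{What is shared.} Your Step~1 is the paper's identity \eqref{eq:symdiff1}, with hub $1$ in place of $j$. Your rewriting of a disjoint pair $T_{ab}+T_{cd}$ as two $4$-cycles is exactly the paper's second case.

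\textbf{Where you diverge.} The paper pairs the $|E(Q)|-d_Q(j)$ hub triangles arbitrarily and turns each pair into at most two $4$-cycles. This bounds $m$ by the number of triangles, which is at most $\binom{r-1}{2}$. You instead compute $\dim\mathcal W=\binom{r-1}{2}-1$ and exhibit an explicit basis of $\mathcal W$ made of $4$-cycles, so $Q$ has a unique expansion in it.

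\textbf{What your route buys.} You get pairwise distinctness for free, together with the slightly sharper bound $m\le\binom{r-1}{2}-1$. The paper's arbitrary pairing does not by itself give distinctness. For example, take $r=j=5$ and let $Q$ be the $4$-cycle $1\,2\,4\,3\,1$. Pairing $\{T_{12},T_{34}\}$ and $\{T_{13},T_{24}\}$, read with $(x,y,u,w)=(1,2,3,4)$ and $(1,3,2,4)$, yields the $4$-cycle $5\,2\,1\,3\,5$ twice. Repeated cycles then have to be cancelled in pairs. This is harmless, since the symmetric difference is unchanged and $m$ only drops, and Lemma~\ref{lift_4cycle_to_C2k} accepts multisets anyway. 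But the paper does not say it.

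\textbf{What the paper's route buys.} It is more hands-on and needs no structural fact about line graphs. You do not actually need Hamiltonicity of $L(K_{r-1})$ either. For any spanning tree of the line graph of $K_{\{2,\dots,r\}}$, the sums $T_e+T_f$ over its edges $ef$ are $\binom{r-1}{2}-1$ linearly independent $4$-cycles. This is the same spanning-tree device the paper uses in Lemma~\ref{lem:pairwise_C2k_divisible}.
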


\begin{proof}
Fix a vertex $j\in[r]$. If every edge of $Q$ is incident with $j$, then $E(Q)=\varnothing$, since any edge $jx\in E(Q)$ would give $d_Q(x)=1$, contradicting the even-degree assumption. In this case,
the conclusion is trivial. Hence, we may assume that $Q$ has an edge $ab$ with $a,b\neq j$.

For each edge $ab\in E(Q)$ with $a,b\neq j$, let $T_{ab}$ be the
triangle on $\{j,a,b\}$. We claim that
\begin{align}
E(Q)= \mathop{\triangle}_{\substack{ab\in E(Q)\\ a,b\neq j}}E(T_{ab}). \label{eq:symdiff1}
\end{align}
Indeed, if $x,y\neq j$, then the edge $xy$ occurs in the right-hand side of \eqref{eq:symdiff1} exactly when $xy\in E(Q)$. On the other hand, for $x\neq j$, the edge $jx$ occurs $d_Q(x)-1$ times if
$jx\in E(Q)$, and $d_Q(x)$ times otherwise. Since $d_Q(x)$ is even,
$jx$ occurs an odd number of times precisely when $jx\in E(Q)$. Thus \eqref{eq:symdiff1} follows.

The number of triangles in \eqref{eq:symdiff1} is $|E(Q)|-d_Q(j)$, which is even. Pair these triangles arbitrarily. If a pair shares a vertex other than $j$, say $T_{xy}$ and $T_{xw}$, then
\[
E(T_{xy})\triangle E(T_{xw})=E(jyxwj),
\]
where $jyxwj$ is a $4$-cycle in $K_{[r]}$. If a pair intersects only at
$j$, say $T_{xy}$ and $T_{uw}$, then
\[
E(T_{xy})\triangle E(T_{uw})
=E(jyxuj)\triangle E(jxuwj),
\]
where both $jyxuj$ and $jxuwj$ are $4$-cycles in $K_{[r]}$. Thus each pair of triangles can be replaced by at most two $4$-cycles without changing the symmetric difference in \eqref{eq:symdiff1}. Consequently,
$E(Q)$ is the symmetric difference of the edge sets of $4$-cycles in $K_{[r]}$.

There are $(|E(Q)|-d_Q(j))/2$ pairs of triangles, and each pair produces at most two $4$-cycles. Hence the number $m$ of resulting $4$-cycles satisfies $m\leq |E(Q)|-d_Q(j)$. Since $|E(Q)|-d_Q(j)$ is the number of edges of $Q$ not incident with $j$, we have $m\leq |E(Q)|-d_Q(j)\leq \binom{r-1}{2}$.
\end{proof}

Let $r\geq4$ and $k\geq2$ be integers, and let $G$ be an $r$-partite graph with vertex partition $V_1,\ldots,V_r$. A \emph{part-index $4$-cycle} is a $4$-cycle in $K_{[r]}$. We write such a cycle as $$D=p_1p_2p_3p_4p_1,$$
where $p_1,p_2,p_3,p_4\in[r]$ are pairwise distinct.
A \emph{$C_{2k}$-lift} of $D$ in $G$ is a copy
$$L = z_1z_2\cdots z_{2k-1}z_{2k}z_1$$
of $C_{2k}$ in $G$ such that
$$
z_a\in V_{p_1}\ (1\leq a\leq 2k-3,\ a\text{ odd}),\quad
z_b\in V_{p_2}\ (2\leq b\leq 2k-2,\ b\text{ even}),\quad
z_{2k-1}\in V_{p_3},\quad
z_{2k}\in V_{p_4}.
$$
In this case, we say that $D$ is \emph{lifted} to $L$. Then $|E_L(V_{p_1},V_{p_2})|=2k-3$ and $|E_L(V_{p_2},V_{p_3})|=|E_L(V_{p_3},V_{p_4})|=|E_L(V_{p_4},V_{p_1})|=1$,
while $L$ contains no edge between any other pair of vertex parts. In particular, deleting $L$ changes the parity of the number of edges between exactly the four pairs of vertex parts corresponding to the edges of $D$.

\begin{Lemma}\label{lift_4cycle_to_C2k}
Let $r\geq 4$ and $k\geq 2$ be integers, and let $\gamma>0$. Then there exists an integer $n_0$ such that the following holds. For every integer $n>n_0$, let $G$ be an $r$-partite graph with vertex partition $V_1,\ldots,V_r$ satisfying $|V_i|\geq n$ for every $i\in[r]$ and $\hat{\delta}(G)\geq \frac{1}{2}+\gamma$. Let $\mathcal D$ be a multiset of part-index $4$-cycles of $G$ such that $|\mathcal D|\leq \binom{r-1}{2}$. Then there exist pairwise vertex-disjoint $C_{2k}$-lifts in $G$, one for each member of $\mathcal D$. Moreover, if $F$ denotes the union of these $C_{2k}$-lifts, then $\hat\Delta(F)\leq 2$.
\end{Lemma}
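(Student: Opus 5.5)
We construct the lifts greedily, one member of $\mathcal D$ at a time. Each new lift avoids every vertex used by the earlier lifts. Since $|\mathcal D|\leq\binom{r-1}{2}$ and each lift has exactly $2k$ vertices, at any stage the set $U$ of already used vertices satisfies $|U|\leq 2k\binom{r-1}{2}=:c$, a constant depending only on $r$ and $k$. We choose $n_0$ so large that $c<\gamma n_0/2$. Then for every vertex $v$ and every part $V_i$ with $v\notin V_i$,
\[
d_G(v,V_i\setminus U)\geq \left(\tfrac12+\gamma\right)|V_i|-c > \left(\tfrac12+\tfrac{\gamma}{2}\right)|V_i|.
\]
So the graph $G-U$ still satisfies a partite minimum degree condition strictly above $\frac12$.

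To build the lift of $D=p_1p_2p_3p_4p_1$ in $G-U$, we proceed in three steps.

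\emph{First}, pick any vertex $z_{2k-1}\in V_{p_3}\setminus U$.

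\emph{Second}, pick a neighbour $z_{2k}\in V_{p_4}\setminus U$ of $z_{2k-1}$. It exists because $d(z_{2k-1},V_{p_4}\setminus U)>|V_{p_4}|/2>0$.

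\emph{Third}, choose $z_1\in V_{p_1}$ adjacent to $z_{2k}$ and $z_{2k-2}\in V_{p_2}$ adjacent to $z_{2k-1}$. Then connect $z_1$ to $z_{2k-2}$ by a path $z_1z_2\cdots z_{2k-2}$ alternating between $V_{p_1}$ and $V_{p_2}$ that avoids $U\cup\{z_{2k-1},z_{2k}\}$. This path has $2k-3$ edges, an odd number, which is consistent with its endpoints lying in different parts.

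For $k=2$ the path is the single edge $z_1z_2$, so we need $z_1\in N(z_{2k})\cap V_{p_1}$ and $z_2\in N(z_3)\cap V_{p_2}$ with $z_1z_2\in E(G)$. Take any $z_2\in N(z_3,V_{p_2})\setminus U$. The sets $N(z_4,V_{p_1})$ and $N(z_2,V_{p_1})$ each have size more than $(\frac12+\frac\gamma2)|V_{p_1}|$ after removing $U$, so they intersect in more than $\gamma|V_{p_1}|$ vertices. Any vertex of the intersection serves as $z_1$.

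For general $k$ we use the same overlap trick, with the path built one step at a time.
\begin{itemize}
\item Choose $z_{2k-2}$ as above and set the goal of reaching it from $z_{2k}$ through $V_{p_1}$.
\item Extend greedily: $z_1\in N(z_{2k},V_{p_1})$, $z_2\in N(z_1,V_{p_2})$, $z_3\in N(z_2,V_{p_1})$, and so on up to $z_{2k-4}$. At every step at most $c+2k$ vertices are forbidden, so a fresh choice is always available.
\item Close the path by choosing $z_{2k-3}\in N(z_{2k-4},V_{p_1})\cap N(z_{2k-2},V_{p_1})$. This common neighbourhood has size greater than $\gamma|V_{p_1}|-c-2k>0$ by the degree bound above.
\end{itemize}
When $k=3$, $z_{2k-4}=z_2$ and the closing step is exactly the $k=2$ argument shifted by one. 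All the $z_a$ are distinct and outside $U$ by construction, so $L$ is a $C_{2k}$-lift of $D$ vertex-disjoint from the previous lifts. Repeated members of the multiset $\mathcal D$ cause no difficulty, since each copy receives its own fresh lift.

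For the bound $\hat\Delta(F)\leq 2$: the lifts are pairwise vertex-disjoint, so each vertex $v$ lies in at most one lift $L$. In that lift $v$ has degree $2$, so $d_F(v,V_i)\leq d_L(v)=2$ for every $i$.

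The only step needing care is the closing of the cycle, i.e.\ the common-neighbourhood step. It is precisely where the hypothesis $\hat\delta(G)\geq\frac12+\gamma$ is used: two neighbourhoods of density above $\frac12$ in the same part must overlap.
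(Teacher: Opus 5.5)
Your proof is correct and uses essentially the same greedy argument as the paper: build each lift vertex by vertex, avoiding the boundedly many vertices already used, and close the cycle via the fact that two partite neighbourhoods of density above $\frac12$ in the same part share at least $2\gamma|V_i|$ vertices. The only difference is where you close the cycle: the paper builds $z_1\cdots z_{2k-1}$ first and closes with a common neighbour $z_{2k}\in V_{p_4}$ of $z_{2k-1}$ and $z_1$, whereas you start from $z_{2k-1},z_{2k}$ and close inside $V_{p_1}$ (at $z_{2k-3}$, or at $z_1$ when $k=2$); the bound $\hat\Delta(F)\leq 2$ is argued identically.
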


\begin{proof}
Set $m:=|\mathcal D|$, and fix an ordering $\mathcal D=(D_1,\ldots,D_m)$, with repetitions retained according to their multiplicities. If $m=0$, then the conclusion is trivial. Thus, assume that $m\geq1$. Since $m\leq\binom{r-1}{2}$, by taking $n_0$ sufficiently large, we may assume
that
$$
n>2km,\quad
\left(\frac12+\gamma\right)n>2km
\quad\text{and}\quad
2\gamma n>2km.
$$

We construct the desired lifts greedily. Suppose that, for some $0\leq a\leq m-1$, we have already constructed pairwise vertex-disjoint $C_{2k}$-lifts $L_1,\ldots,L_a$ of $D_1,\ldots,D_a$, respectively. Write $D_{a+1}=p_1p_2p_3p_4p_1$. We construct a $C_{2k}$-lift $L_{a+1}=z_1z_2\cdots z_{2k-1}z_{2k}z_1$
of $D_{a+1}$ which is vertex-disjoint from $L_1,\ldots,L_a$ as follows.

Choose the vertices $z_1,\ldots,z_{2k}$ successively. During the construction, the vertices of $L_1,\ldots,L_a$ and those already chosen for $L_{a+1}$ are forbidden. Since $2ka+2k=2k(a+1)\leq 2km$,
at most $2km$ vertices are forbidden at any stage. Since $|V_{p_1}|\geq n>2km$, we may choose
$z_1\in V_{p_1}$ outside $V(L_1)\cup\cdots\cup V(L_a)$.
Suppose that $z_1,\ldots,z_q$ have been chosen, where
$1\leq q\leq 2k-3$. Define
$$
U_{q+1}:=
\begin{cases}
V_{p_1},&\text{if }z_q\in V_{p_2};\\
V_{p_2},&\text{if }z_q\in V_{p_1}.
\end{cases}
$$
Since $d_G(z_q,U_{q+1})\geq \left(\frac12+\gamma\right)n>2km$, there exists
$z_{q+1}\in N_G(z_q,U_{q+1})$ that is not forbidden. Thus we can choose $z_1,\ldots,z_{2k-2}$ successively. Next, since $d_G(z_{2k-2},V_{p_3})\geq\left(\frac12+\gamma\right)n>2km$,
we may choose $z_{2k-1}\in N_G(z_{2k-2},V_{p_3})$ outside the forbidden vertices. Finally, since
$$
|N_G(z_{2k-1},V_{p_4})\cap N_G(z_1,V_{p_4})|
\geq 2\gamma n>2km,
$$
we may choose $z_{2k}\in N_G(z_{2k-1},V_{p_4})\cap N_G(z_1,V_{p_4})$
outside the forbidden vertices. Hence $L_{a+1}=z_1z_2\cdots z_{2k-1}z_{2k}z_1$ is a $C_{2k}$-lift of $D_{a+1}$ and is vertex-disjoint from $L_1,\ldots,L_a$.

By induction, all members of $\mathcal D$ admit pairwise vertex-disjoint
$C_{2k}$-lifts $L_1,\ldots,L_m$. Let $F=\bigcup_{a=1}^{m}L_a$.
Since the lifts are vertex-disjoint, every vertex of $F$ belongs to at most one lift. As each lift is a copy of $C_{2k}$, every vertex has degree at most $2$ in $F$. Hence
$\hat\Delta(F)\leq2$.
\end{proof}

\begin{Lemma}\label{lem:part_pair_even}
Let $r\geq3$ and $k\geq2$ be integers, and let $\gamma>0$. Then there exists an integer $n_0$ such that the following holds. For every integer $n>n_0$, let $G$ be a $C_{2k}$-divisible $r$-partite graph with vertex partition $V_1,\ldots,V_r$ satisfying $n\leq |V_i|\leq 2n$ for every $i\in[r]$ and $\hat{\delta}(G)\geq\frac12+\gamma$. Then there exists a subgraph $H_1$ of $G$ such that
\begin{enumerate}
\item[\textup{(i)}] $H_1$ admits a $C_{2k}$-decomposition and $\hat\Delta(H_1)\leq 2$;
\item[\textup{(ii)}] $G-H_1$ is $C_{2k}$-divisible;
\item[\textup{(iii)}] $|E_{G-H_1}(V_i,V_j)|\equiv0\pmod2$ for every $1\leq i<j\leq r$.
\end{enumerate}
\end{Lemma}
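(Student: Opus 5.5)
The plan is to encode the parities of the edge counts between pairs of parts as a graph on the index set $[r]$, and then correct them by removing vertex-disjoint $C_{2k}$-lifts supplied by Lemma~\ref{lift_4cycle_to_C2k}. Let $Q$ be the spanning subgraph of $K_{[r]}$ in which $ij\in E(Q)$ if and only if $|E_G(V_i,V_j)|$ is odd.

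First I would check that $Q$ satisfies the hypotheses of Lemma~\ref{4cycles_sym_diff}. Since $G$ is $C_{2k}$-divisible, $2=\gcd(C_{2k})$ divides $\gcd(G)$, so every vertex of $G$ has even degree. For each $i\in[r]$ we have
\[
\sum_{v\in V_i} d_G(v)=\sum_{j\neq i}|E_G(V_i,V_j)|,
\]
and the left-hand side is even. Hence $d_Q(i)$, which is the number of odd terms on the right, is even. Also
\[
|E(Q)|\equiv \sum_{i<j}|E_G(V_i,V_j)| = |E(G)|\equiv 0 \pmod 2,
\]
because $2k$ divides $|E(G)|$.

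For $r=3$ the argument is immediate. A graph on three vertices with all degrees even is either empty or a triangle, and a triangle has an odd number of edges. So $Q$ is empty, and $H_1:=\varnothing$ works.

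For $r\geq4$ I would proceed as follows.
\begin{enumerate}
\item Apply Lemma~\ref{4cycles_sym_diff} to $Q$. This gives distinct part-index $4$-cycles $D_1,\ldots,D_m$ with $m\leq\binom{r-1}{2}$ and $E(Q)=\triangle_{a=1}^{m}E(D_a)$.
\item Apply Lemma~\ref{lift_4cycle_to_C2k} with $\mathcal D=\{D_1,\ldots,D_m\}$. Its hypotheses hold because $|V_i|\geq n$, $\hat\delta(G)\geq\frac12+\gamma$, and $n_0$ is taken as in that lemma. This gives pairwise vertex-disjoint $C_{2k}$-lifts $L_1,\ldots,L_m$.
\item Set $H_1:=\bigcup_{a=1}^m L_a$.
\end{enumerate}

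Then $H_1$ is trivially $C_{2k}$-decomposable, and $\hat\Delta(H_1)\leq2$ by Lemma~\ref{lift_4cycle_to_C2k}, which gives (i). For (ii), each $L_a$ is a cycle, so deleting $H_1$ keeps every degree even. Moreover, $|E(H_1)|=2km$, so $2k$ still divides the number of edges. Hence $G-H_1$ is $C_{2k}$-divisible.

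For (iii), recall that deleting a lift $L_a$ changes the parity of $|E(V_i,V_j)|$ exactly for the four pairs $ij\in E(D_a)$. Deleting all of $H_1$ therefore flips the parity of $|E(V_i,V_j)|$ exactly when $ij$ lies in an odd number of the $D_a$. By step 1, this happens precisely when $ij\in E(Q)$. So every odd pair becomes even and every even pair stays even.

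The only delicate point is the bookkeeping in (iii): we need the parity flips from the separate lifts to combine as a symmetric difference. This holds because the lifts are edge-disjoint (indeed vertex-disjoint), so their edge contributions to each pair $V_i,V_j$ simply add. Everything else follows directly from the earlier lemmas.
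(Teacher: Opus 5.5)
Your proposal is correct and follows the paper's proof essentially step for step. You define the parity graph $Q$ on $[r]$, check that its degrees and edge count are even, dispose of $r=3$ by noting that $Q$ must be empty, and for $r\geq4$ write $E(Q)$ as a symmetric difference of part-index $4$-cycles (Lemma~\ref{4cycles_sym_diff}) and delete vertex-disjoint $C_{2k}$-lifts of them (Lemma~\ref{lift_4cycle_to_C2k}). You also state explicitly that the parity flips add up because the lifts are edge-disjoint; the paper uses this only implicitly.
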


\begin{proof}
Let $Q$ be the spanning subgraph of $K_{[r]}$ such that $ij\in E(Q)$ if and only if $|E_G(V_i,V_j)|\equiv1\pmod2$. Since $G$ is $C_{2k}$-divisible, every vertex of $G$ has even degree.
Therefore, for every $i\in[r]$,
$$ d_Q(i) \equiv \sum_{j\neq i}|E_G(V_i,V_j)| = \sum_{v\in V_i}d_G(v) \equiv 0\pmod2.$$
Moreover,
$$
|E(Q)|\equiv \sum_{1\leq i<j\leq r}|E_G(V_i,V_j)|=|E(G)|\equiv0\pmod2.
$$

If $r=3$, then $Q$ is a subgraph of $K_3$ with all degrees even. Hence $Q$ is either empty or $K_3$. Since $|E(Q)|$ is even, $Q$ must be empty. Thus,
$|E_G(V_i,V_j)|$ is even for every $1\leq i<j\leq3$, and the conclusion holds with $H_1=\varnothing$. Hence, assume that $r\geq4$. By Lemma~\ref{4cycles_sym_diff}, there exist $4$-cycles $D_1,\ldots,D_m$ in $K_{[r]}$, with $0\leq m\leq\binom{r-1}{2}$, such that
\begin{align}
E(Q)=E(D_1)\triangle\cdots\triangle E(D_m).
\label{eq:symdiff2}
\end{align}
Consequently, every edge $ij\in E(Q)$ is contained in an odd number of the $4$-cycles $D_1,\ldots,D_m$, while every edge $ij\notin E(Q)$ is contained in an even number of them. By Lemma~\ref{lift_4cycle_to_C2k}, these $4$-cycles $D_1,\ldots,D_m$ admit pairwise vertex-disjoint $C_{2k}$-lifts $L_1,\ldots,L_m$ in $G$. Let
$$
H_1:=\bigcup_{\nu=1}^{m}L_\nu .
$$
Since the lifts are pairwise vertex-disjoint and every vertex of each
$L_\nu$ has degree $2$ in $L_\nu$, we have $\hat\Delta(H_1)\leq2$. Moreover, $H_1$ admits a $C_{2k}$-decomposition.

For each $\nu\in[m]$, deleting the edges of the $C_{2k}$-lift $L_\nu$ changes the parity of the number of edges between precisely the four pairs of vertex parts corresponding to the edges of $D_\nu$. Therefore,
by \eqref{eq:symdiff2}, after deleting all of $L_1,\ldots,L_m$, the parity of $|E_G(V_i,V_j)|$ is changed an odd number of times precisely when $ij\in E(Q)$. Hence,
$$
|E_{G-H_1}(V_i,V_j)|\equiv0\pmod2
$$
for every $1\leq i<j\leq r$. Since $H_1$ is a union of edge-disjoint copies of $C_{2k}$, we have
\[
|E(G-H_1)|
\equiv
|E(G)|
\equiv0\pmod{2k}.
\]
Moreover, every vertex loses an even number of incident edges when $H_1$ is deleted. Hence every vertex of $G-H_1$ has even degree, and therefore $G-H_1$ is $C_{2k}$-divisible.

This proves all the required properties for $H_1$, and the proof is complete.
\end{proof}

\subsection{The constructions of $H_2$ and $H_3$}\label{sec:H23}

In this subsection, we construct the subgraphs $H_2$ and $H_3$. We first prove Lemma \ref{lem:realization}, and then apply it to prove Lemma \ref{lem:partite_degree_even} and Lemma \ref{lem:pairwise_C2k_divisible}, which produce $H_2$ and $H_3$, respectively.

Suppose that $r\geq3$ and $k\geq2$, and let $G$ be an $r$-partite graph
with vertex partition $V_1,\ldots,V_r$. A \emph{tuple} of $G$ is an ordered tuple $(p_1;x,y;p_2,p_3)$, where $p_1,p_2,p_3\in[r]$ are pairwise distinct and $x,y\in V_{p_1}$ are distinct vertices. A \emph{$C_{2k}$-realization} of $(p_1;x,y;p_2,p_3)$ is a copy of $C_{2k}$ in $G$ of the form
$$ xz_1z_2\cdots z_{2k-3}yz_{2k-2}x,$$
where
$$
z_a\in V_{p_1}\ (2\leq a\leq 2k-4,\ a\text{ even}),\quad
z_b\in V_{p_2}\ (1\leq b\leq 2k-3,\ b\text{ odd}),\quad
z_{2k-2}\in V_{p_3}.
$$
The vertices $x$ and $y$ are called the \emph{basic vertices}, and the vertices $z_1,\ldots,z_{2k-2}$ are called the \emph{auxiliary vertices}. If $R$ is a $C_{2k}$-realization of $(p_1;x,y;p_2,p_3)$, then $|E_R(V_{p_1},V_{p_2})|=2k-2$ and $|E_R(V_{p_1},V_{p_3})|=2$.

\begin{Lemma}\label{lem:realization}
Let $r\geq 3$ and $k\geq 2$ be integers, let $\gamma>0$, and let $K,K',C\geq0$. Then there exist constants $C'>0$ and $n_0$ such that the following holds. For every integer $n>n_0$, let $G$ be an $r$-partite graph with vertex partition $V_1,\ldots,V_r$ satisfying $|V_i|\geq n$ for each $i\in[r]$ and $\hat{\delta}(G)\geq\frac{1}{2}+\gamma$.
Let $F$ be a subgraph of $G$ with the same vertex partition satisfying $\hat\Delta(F)\leq C$. Let $\mathcal D$ be a multiset of tuples of $G$ such that $|\mathcal D|\leq Kn$, and suppose that every vertex $v\in V(G)$ occurs as a basic vertex in at most $K'$ members of $\mathcal D$. Then there exist pairwise edge-disjoint $C_{2k}$-realizations in $G-F$, one for each member of $\mathcal D$. Moreover, if $F'$ denotes the union of these $C_{2k}$-realizations, then $\hat\Delta(F')\leq C'$.
\end{Lemma}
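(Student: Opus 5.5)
We build the realizations greedily, one member of $\mathcal D$ at a time, while keeping every vertex lightly loaded. Fix an ordering $\mathcal D=(T_1,\ldots,T_m)$ with $m\leq Kn$. Call a vertex $v$ \emph{saturated} at a given stage if $v$ has already been used as an auxiliary vertex in at least $M$ of the realizations constructed so far. Here $M$ is a large constant depending only on $k,K,\gamma$. The number of auxiliary slots over the whole process is at most $(2k-2)Kn$, so at every stage the number of saturated vertices is at most $(2k-2)Kn/M$. Choosing $M\geq 8(2k-2)K/\gamma$ makes this at most $\gamma n/4$.

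The forbidden edges at stage $a+1$ are the edges of $F$ together with the edges of $R_1,\ldots,R_a$. We bound the degree of a vertex $v$ in this forbidden graph into each part. It has at most $C$ edges from $F$. A vertex is either basic or auxiliary in each realization containing it. As a basic vertex, $v$ lies in at most $K'$ realizations, contributing at most $2K'$ edges. As an auxiliary vertex, $v$ lies in at most $M+1$ realizations until it becomes saturated, contributing at most $2(M+1)$ edges; after that we never use it as an auxiliary vertex again. Hence every vertex has forbidden partite degree at most
\[
C_0:=C+2K'+2M+2,
\]
which is a constant. The final $\hat\Delta(F')\leq C_0$ follows from the same count, so we may take $C':=C_0$.

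Now fix $T_{a+1}=(p_1;x,y;p_2,p_3)$ and build $xz_1z_2\cdots z_{2k-3}yz_{2k-2}x$ as follows.
\begin{enumerate}
\item Choose $z_1\in N_G(x,V_{p_2})$ such that $z_1$ is unsaturated, $z_1$ is distinct from $x$ and $y$, and $xz_1$ is not forbidden. This leaves at least $(\frac12+\gamma)n-\gamma n/4-C_0$ choices.
\item Alternate between $V_{p_1}$ and $V_{p_2}$ for $z_2,\ldots,z_{2k-4}$ in the same way. Each new vertex must be a non-forbidden neighbour of its predecessor, must be unsaturated, and must avoid the at most $2k$ vertices already chosen.
\item Choose $z_{2k-3}\in V_{p_2}$ as a common neighbour of $z_{2k-4}$ and $y$. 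When $k=2$ there is no $z_{2k-4}$, and $z_1=z_{2k-3}$ must instead be a common neighbour of $x$ and $y$.
\item Choose $z_{2k-2}\in V_{p_3}$ as a common neighbour of $x$ and $y$.
\end{enumerate}
In steps 3 and 4, two vertices of the same part $V_{p_1}$ have at least $2\gamma|V_j|\geq 2\gamma n$ common neighbours in $V_j$, by $\hat\delta(G)\geq\frac12+\gamma$. From these we remove at most $\gamma n/4$ saturated vertices, at most $2C_0$ vertices joined by a forbidden edge, and at most $2k$ already-used vertices. Since the surplus $2\gamma n$ swamps these removals for $n$ large, a valid choice always exists. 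All new edges are non-forbidden, so the realizations are pairwise edge-disjoint and avoid $F$.

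The main obstacle is the degree bookkeeping. Without the saturation rule, a greedy choice could reuse some vertex as an auxiliary vertex linearly many times, and $\hat\Delta(F')$ would then be unbounded. The saturation threshold resolves this, and the point to check is that the two constraints are compatible. The rule removes only $O(n/M)$ vertices, which is below the $\gamma n$ surplus in the common neighbourhoods. At the same time, $M$ depends only on $k$, $K$ and $\gamma$, so it keeps the loads constant and gives the bound $C'$. The remaining verifications are routine counting: distinctness of the vertices of each cycle, and the case $k=2$.
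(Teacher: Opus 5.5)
Your proposal is correct and follows essentially the same greedy argument as the paper. You cap the number of times a vertex serves as an auxiliary vertex by a constant $M$ of order $(2k-2)K/\gamma$, so that at most $\gamma n/4$ vertices are ever blocked. You then use the $(\frac12+\gamma)n$ neighbourhoods and the $2\gamma n$ common neighbourhoods to choose each $z_i$ while avoiding blocked vertices, forbidden edges and already chosen vertices.

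The only difference is cosmetic. The paper caps $\mu_s(z,V_t)$ separately for each vertex--part pair, which gives $C'=2M+K'$. You cap the total auxiliary usage per vertex instead, which is slightly simpler and works equally well.
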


\begin{proof}
Set $m:=|\mathcal D|\leq Kn$, and fix an ordering $\mathcal D=(\tau_1,\ldots,\tau_m)$, where repetitions are retained according to their multiplicities. If $m=0$, then $\mathcal D=\varnothing$, and the conclusion is immediate. Thus, assume that $m\geq1$.

We first consider the case $k\geq3$. Suppose that, for some $0\leq s\leq m-1$, we have already constructed pairwise edge-disjoint $C_{2k}$-realizations $R_1,\ldots,R_s$ of $\tau_1,\ldots,\tau_s$, respectively, in $G-F$. For $s\geq1$, let
\[
F_s:=\bigcup_{j=1}^{s}R_j,
\]
and set $F_0:=\varnothing$.

For every $z\in V(G)$ and every vertex part $V_t$ with $z\notin V_t$,
let $\mu_s(z,V_t)$ denote the number of indices $j\in[s]$ such that
$z$ is an auxiliary vertex of $R_j$ and both neighbours of $z$ on
$R_j$ lie in $V_t$. Let
$$
M:=\left\lceil\frac{4(2k-2)K}{\gamma}\right\rceil+1.
$$
Throughout the construction, we maintain the invariant
\begin{align}
\mu_s(z,V_t)\leq M
\qquad
\text{for every $z\in V(G)$ and every part $V_t$ with $z\notin V_t$.}
\label{eq:4}
\end{align}
This clearly holds when $s=0$.

We first establish two bounds that will be used in constructing $R_{s+1}$. For any distinct vertex parts $U$ and $W$ of $G$, let
$$
\mathcal S_s(U,W):=
\{z\in U:\mu_s(z,W)=M\}.
$$
Each realization $R_j$ has exactly $2k-2$ auxiliary vertices. Hence
$$
M|\mathcal S_s(U,W)|=\sum_{z\in\mathcal S_s(U,W)}\mu_s(z,W)
\leq \sum_{z\in U}\mu_s(z,W)\leq
(2k-2)s\leq (2k-2)Kn.
$$
By the definition of $M$, we obtain
\begin{align}
|\mathcal S_s(U,W)|<\frac{\gamma}{4}n.
\label{eq:5}
\end{align}
We also have the following bound. For any $w\in V(G)$ and any vertex
part $U$ with $w\notin U$, every auxiliary vertex $w$ in a realization
has both of its neighbours in the same vertex part. Hence, by
\eqref{eq:4}, the realizations in which $w$ is an auxiliary vertex
with both neighbours in $U$ contribute at most $2M$ edges from $w$
to $U$. On the other hand, $w$ is a basic vertex in at most $K'$
members of $\mathcal D$, and each such realization contains at most
one edge from $w$ to $U$. Therefore,
\begin{align}
d_{F_s}(w,U)\leq 2M+K'.
\label{eq:7}
\end{align}

Write $\tau_{s+1}=(p_1;x,y;p_2,p_3)$. We construct a realization $xz_1z_2\cdots z_{2k-3}yz_{2k-2}x$ of $\tau_{s+1}$ by choosing its auxiliary vertices successively. We first choose $z_1,\ldots,z_{2k-4}$, alternating between $V_{p_2}$ and $V_{p_1}$, then choose $z_{2k-3}\in V_{p_2}$ adjacent to both $z_{2k-4}$ and $y$, and finally choose $z_{2k-2}\in V_{p_3}$ adjacent to both $x$ and $y$. Since
$$ d_G(x,V_{p_2})-|\mathcal{S}_s(V_{p_2},V_{p_1})|-d_{F_s}(x,V_{p_2})-d_F(x,V_{p_2})\geq\left(\frac{1}{2}+\gamma\right)n-\frac{\gamma}{4}n-(2M+K')-C>0$$
by taking $n_0$ sufficiently large, we may choose $z_1\in N_G(x,V_{p_2})$ such that $xz_1\notin E(F\cup F_s)$ and $z_1\notin\mathcal{S}_s(V_{p_2},V_{p_1})$. Suppose that $z_1,\ldots,z_q$ have been chosen, where $1\leq q\leq 2k-5$. Define
$$
U_{q+1}:=
\begin{cases}
V_{p_1}, & \text{if } z_q\in V_{p_2},\\
V_{p_2}, & \text{if } z_q\in V_{p_1}.
\end{cases}
$$
By taking $n_0$ sufficiently large, we have
\begin{align*}
&d_G(z_q,U_{q+1})-|\mathcal{S}_s(U_{q+1},U_q)|-d_{F_s}(z_q,U_{q+1})-d_F(z_q,U_{q+1})-2k\\
&\geq\left(\frac{1}{2}+\gamma\right)n-\frac{\gamma}{4}n-(2M+K')-C-2k>0.
\end{align*}
Hence, we may choose $z_{q+1}\in N_G(z_q,U_{q+1})$ such that $z_qz_{q+1}\notin E(F\cup F_s)$, $z_{q+1}\notin\{x,y,z_1,\ldots,z_q\}$, and $z_{q+1}\notin\mathcal{S}_s(U_{q+1},U_q)$. Thus, $z_1,\ldots,z_{2k-4}$ can be chosen successively. Since
$$
|N_G(z_{2k-4},V_{p_2})\cap N_G(y,V_{p_2})|\geq 2\gamma n,
$$
by taking $n_0$ sufficiently large, we have
$$
\begin{aligned}
&|N_G(z_{2k-4},V_{p_2})\cap N_G(y,V_{p_2})|-|\mathcal{S}_s(V_{p_2},V_{p_1})|-d_{F_s}(z_{2k-4},V_{p_2})-d_{F_s}(y,V_{p_2})\\
&\qquad-d_F(z_{2k-4},V_{p_2})-d_F(y,V_{p_2})-2k\geq2\gamma n-\frac{\gamma}{4}n-2(2M+K')-2C-2k>0.
\end{aligned}
$$
Hence, we may choose $z_{2k-3}\in N_G(z_{2k-4},V_{p_2})\cap N_G(y,V_{p_2})$ such that $z_{2k-4}z_{2k-3},z_{2k-3}y\notin E(F\cup F_s)$, $z_{2k-3}\notin\{x,y,z_1,\ldots,z_{2k-4}\}$, and $z_{2k-3}\notin\mathcal{S}_s(V_{p_2},V_{p_1})$. Similarly, we may choose $z_{2k-2}\in N_G(x,V_{p_3})\cap N_G(y,V_{p_3})$ such that $yz_{2k-2},z_{2k-2}x\notin E(F\cup F_s)$, $z_{2k-2}\notin\{x,y,z_1,\ldots,z_{2k-3}\}$, and $z_{2k-2}\notin\mathcal{S}_s(V_{p_3},V_{p_1})$. Hence,
$$ R_{s+1}:=xz_1z_2\cdots z_{2k-3}yz_{2k-2}x $$
is a $C_{2k}$-realization of $\tau_{s+1}$ in $G-(F\cup F_s)$ and is edge-disjoint from $R_1,\ldots,R_s$. By the choice of these vertices, \eqref{eq:4} also holds with $s$ replaced by $s+1$. By induction, all members of $\mathcal D$ admit pairwise edge-disjoint $C_{2k}$-realizations $R_1,\ldots,R_m$. Let
$$ F':=\bigcup_{j=1}^m R_j.$$
By \eqref{eq:7}, with $s=m$, we have
$$ \hat\Delta(F')\leq 2M+K'.$$
Thus, the conclusion follows with $C':=2M+K'$.

When $k=2$, a realization of a tuple $(p_1;x,y;p_2,p_3)$ has the form $xz_1yz_2x$. By the same greedy argument, we may choose $z_1\in N_G(x,V_{p_2})\cap N_G(y,V_{p_2})$ and $z_2\in N_G(x,V_{p_3})\cap N_G(y,V_{p_3})$ successively, while avoiding the forbidden vertices and edges. Hence, the same conclusion follows.
\end{proof}

\begin{Lemma}\label{lem:partite_degree_even}
Let $r\geq3$ and $k\geq2$ be integers, and let $\gamma>0$. Then there exist a constant $C>0$ and an integer $n_0$ such that the following holds. For every integer $n>n_0$, let $G$ be a $C_{2k}$-divisible $r$-partite graph with vertex partition $V_1,\ldots,V_r$ satisfying $n\leq |V_i|\leq 2n$ for every $i\in[r]$ and $\hat{\delta}(G)\geq\frac{1}{2}+\gamma$. If $|E_{G}(V_i,V_j)|\equiv0\pmod2$ for every $1\leq i<j\leq r$, then there exists a subgraph $H_2$ of $G$ such that
\begin{enumerate}
\item[\textup{(i)}] $H_2$ admits a $C_{2k}$-decomposition and $\hat\Delta(H_2)\leq C$;
\item[\textup{(ii)}] $G-H_2$ is $C_{2k}$-divisible;
\item[\textup{(iii)}] $d_{G-H_2}(v,V_t)\equiv0\pmod2$ for every $t\in[r]$ and every $v\in V(G)\setminus V_t$.
\end{enumerate}
\end{Lemma}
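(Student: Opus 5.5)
We use $C_{2k}$-realizations of tuples (Lemma~\ref{lem:realization}) to fix the parities of partite degrees. The key point is how deleting a realization $R$ of $(p_1;x,y;p_2,p_3)$ affects these parities. Every auxiliary vertex has both its $R$-neighbours in the same part, so its partite degrees keep their parity. The basic vertex $x$ loses one edge to $V_{p_2}$ (namely $xz_1$) and one edge to $V_{p_3}$ (namely $xz_{2k-2}$). The same holds for $y$, via $z_{2k-3}y$ and $yz_{2k-2}$. Hence deleting $R$ flips exactly the parities of $d(x,V_{p_2}),d(x,V_{p_3}),d(y,V_{p_2}),d(y,V_{p_3})$ and nothing else.

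Now fix $t\in[r]$ and $p\neq t$. Let $O_{p,t}:=\{v\in V_p: d_G(v,V_t)\text{ odd}\}$. Then $|O_{p,t}|\equiv |E_G(V_p,V_t)|\equiv 0\pmod 2$, by hypothesis. Also, for each $v$, the number of $t\neq p$ with $v\in O_{p,t}$ is even, since $d_G(v)$ is even.

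For each $p$ we want a multiset of tuples $(p;x,y;q,q')$ whose combined flips are exactly the indicator of the odd pairs $(v,t)$ with $v\in V_p$. Fix a reference part index $t_0=t_0(p)\neq p$. First, for each $t\neq p,t_0$, pair the vertices of $O_{p,t}$ arbitrarily into pairs $\{x,y\}$, and for each pair use the tuple $(p;x,y;t,t_0)$. This fixes all parities toward every $V_t$ with $t\neq t_0$. Toward $V_{t_0}$, vertex $v$ is flipped once for each $t\neq p,t_0$ with $v\in O_{p,t}$. That count has the same parity as $[v\in O_{p,t_0}]$, because the total count over all $t\neq p$ is even. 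So after all the flips, the degree toward $V_{t_0}$ is even as well.

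Each vertex is a basic vertex in at most $r-2$ tuples, and the total number of tuples is at most $r\cdot r\cdot 2n$. So Lemma~\ref{lem:realization} applies with $F=\varnothing$, $K=2r^2$ and $K'=r$. It gives edge-disjoint realizations whose union $H_2$ has $\hat\Delta(H_2)\leq C'$. Setting $C:=C'$, property (i) holds.

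For (ii), $H_2$ is a union of edge-disjoint copies of $C_{2k}$. Therefore $|E(G-H_2)|\equiv|E(G)|\equiv 0 \pmod{2k}$, and all vertex degrees stay even, so $G-H_2$ is $C_{2k}$-divisible. Property (iii) follows from the parity bookkeeping above.

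The main obstacle is the parity bookkeeping, namely making sure that each tuple flips exactly the intended four partite-degree parities. The second-to-last edge in the realization, $z_{2k-3}y$, goes from $V_{p_2}$ to $y$, so the two flips at $y$ are indeed toward $V_{p_2}$ and $V_{p_3}$. We should also check that $|O_{p,t}|$ is even, which needs the hypothesis on $|E_G(V_i,V_j)|$, and that the vertices of a pair are distinct, as the definition of a tuple requires.
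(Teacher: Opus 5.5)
Your proof is correct. It differs from the paper's in how the required parity flips are packaged into tuples.

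The paper works, for each part $t$, with the auxiliary bipartite graph $P_t$ on $V_t\cup([r]\setminus\{t\})$, whose edges record the odd partite degrees. It decomposes $P_t$ into even cycles and writes each cycle $v_1h_1v_2h_2\cdots v_\ell h_\ell v_1$ as the symmetric difference of the fan of $4$-cycles $v_1h_qv_{q+1}h_{q+1}v_1$. Each such $4$-cycle becomes the tuple $(t;v_1,v_{q+1};h_q,h_{q+1})$.

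In that language, your tuple $(p;x,y;t,t_0)$ is the $4$-cycle $x\,t\,y\,t_0\,x$ of $P_p$, and all your $4$-cycles pass through the single reference vertex $t_0$. You pair up $O_{p,t}$ directly for each $t\neq t_0$ and route every second flip through $V_{t_0}$. The direction $V_{t_0}$ then corrects itself, because $d_G(v)$ is even. This shows that the symmetric difference of your $4$-cycles is exactly $E(P_p)$ without needing any cycle decomposition.

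Your route is more elementary and gives slightly better constants. You get $K'=r-2$ instead of $\binom{r-1}{2}$, and $|\mathcal D|\leq r(r-2)n$ instead of $r(r-1)n$. The paper's construction is more symmetric, but that buys nothing here, since Lemma~\ref{lem:realization} only needs $K$ and $K'$ to be bounded.

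The remaining bookkeeping matches the paper. Auxiliary vertices, including those lying in the base part $V_p$, always have both cycle-neighbours in one part. Hence only the four parities at the basic vertices flip, contributions from edge-disjoint realizations add modulo $2$, and $|E(G-H_2)|\equiv|E(G)|\pmod{2k}$.
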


\begin{proof}
For every $t\in[r]$, let $P_t$ be the bipartite graph with bipartition $(V_t,[r]\setminus\{t\})$ in which, for $v\in V_t$ and $h\in[r]\setminus\{t\}$,
$vh\in E(P_t)$ if and only if $d_G(v,V_h)\equiv1\pmod2$. For every $v\in V_t$, since $G$ is $C_{2k}$-divisible,
$$ d_{P_t}(v) \equiv \sum_{h\in[r],\, h\neq t} d_{G}(v,V_h) \equiv d_{G}(v) \equiv 0 \pmod 2.$$
For every $h\in[r]\setminus\{t\}$, by assumption,
$$ d_{P_t}(h)\equiv\sum_{v\in V_t}d_{G}(v,V_h)\equiv |E_{G}(V_t,V_h)|\equiv0\pmod2.$$
Thus every vertex of $P_t$ has even degree. Then $P_t$ can be decomposed into pairwise edge-disjoint even cycles. For any even cycle $L=v_1h_1v_2h_2\ldots v_\ell h_\ell v_1$ in this decomposition, where $v_a\in V_t$ and $h_a\in[r]\setminus\{t\}$ for $a=1,2,\ldots,\ell$, introduce, for each $1\leq q\leq \ell-1$, a $4$-cycle
$$
R_q=v_1h_qv_{q+1}h_{q+1}v_1,
$$
and associate with $R_q$ the tuple $(t;v_1,v_{q+1};h_q,h_{q+1})$. Let $\mathcal D_t$ denote the multiset of all tuples arising from the fixed cycle decomposition of $P_t$, and let $\mathcal D$ be the multiset union of the $\mathcal D_t$ over $t\in[r]$.

We now verify that $\mathcal D$ satisfies the hypotheses of Lemma \ref{lem:realization}. Since each such cycle $L$ produces $\ell-1\leq |E(L)|/2$ tuples, $|\mathcal D_t|\leq |E(P_t)|/2$. Since every vertex in $V_t$ has degree at most $r-1$ in $P_t$, we have $|E(P_t)|\leq (r-1)|V_t|$. Thus,
$$|\mathcal D| \leq\sum_{t=1}^r |E(P_t)|/2 \leq \frac{r-1}{2}\sum_{t=1}^r|V_t|\leq \frac{r-1}{2} 2rn = r(r-1)n.$$
For any $v\in V_t$, the edge-disjoint cycle decomposition of $P_t$ contains $v$ in at most $d_{P_t}(v)/2\leq (r-1)/2$ cycles. Each such cycle has length at most $2(r-1)$ and hence contributes at most $r-2$ tuples in which $v$ is a basic vertex. Therefore, every vertex occurs as a basic vertex in at most $\frac{(r-2)(r-1)}{2}$ members of $\mathcal D$. Applying Lemma~\ref{lem:realization} with
$$F=\varnothing,\qquad K=r(r-1),\qquad K'=\frac{(r-2)(r-1)}{2},$$
we obtain pairwise edge-disjoint $C_{2k}$-realizations in $G$, one for each member of $\mathcal D$. For each $t\in[r]$, let $H_{2,t}$ be the union of the realizations corresponding to the tuples in $\mathcal D_t$, and let
$$H_2:=\bigcup_{t=1}^r H_{2,t}.$$
Since $H_2$ is the union of all the realizations corresponding to the tuples in $\mathcal D$, Lemma~\ref{lem:realization} yields a constant $C'$ such that $\hat\Delta(H_2)\leq C'$.

Fix $t\in[r]$, and let $L=v_1h_1v_2h_2\ldots v_\ell h_\ell v_1$ be a cycle in the fixed cycle decomposition of $P_t$. We claim that
$$
E(L)=E(R_1)\triangle\cdots\triangle E(R_{\ell-1}),
$$
where, for $1\leq q\leq \ell-1$, $R_q=v_1h_qv_{q+1}h_{q+1}v_1$ is a $4$-cycle. Indeed, for every $2\leq q\leq \ell-1$, the edge $v_1h_q$ belongs to exactly the two $4$-cycles $R_{q-1}$ and $R_q$, and therefore disappears in the symmetric difference. These are precisely the edges appearing in the $R_q$'s that do not belong to $L$. On the other hand, the edges $v_1h_1$ and $v_1h_\ell$ occur only in $R_1$ and $R_{\ell-1}$, respectively. Furthermore, for every $1\leq q\leq \ell-1$, the edges $h_qv_{q+1}$ and $v_{q+1}h_{q+1}$ occur only in $R_q$. These edges, together with $v_1h_1$ and $v_1h_\ell$, are exactly the edges of $L$. Hence every edge of $L$ occurs exactly once in $E(R_1)\triangle\cdots\triangle E(R_{\ell-1})$, whereas every edge not belonging to $L$ occurs an even number of times. This proves the claim. For each $1\leq q\leq \ell-1$, deleting a $C_{2k}$-realization of the tuple $(t;v_1,v_{q+1};h_q,h_{q+1})$ associated with $R_q=v_1h_qv_{q+1}h_{q+1}v_1$ reverses exactly the parities of the four partite degrees of the vertices $v_1,v_{q+1}\in V_t$ into $V_{h_q}$ and $V_{h_{q+1}}$. Hence, after deleting the $C_{2k}$-realizations of all the tuples associated with $R_1,\ldots,R_{\ell-1}$, for every $v\in V_t$ and every $h\in[r]\setminus\{t\}$, the parity of the partite degree of $v$ into $V_h$ is reversed if and only if $vh\in E(L)$. Since the cycles form an edge decomposition of $P_t$, every edge $vh\in E(P_t)$ is contained in exactly one of these cycles. Therefore, after deleting $H_{2,t}$, for every $v\in V_t$ and every $h\in[r]\setminus\{t\}$, the parity of the partite degree of $v$ into $V_h$ is reversed if and only if $vh\in E(P_t)$. By the definition of $P_t$, this occurs if and only if the partite degree of $v$ into $V_h$ is odd in $G$. Consequently, all these partite degrees are even in $G-H_{2,t}$; that is,
$$
d_{G-H_{2,t}}(v,V_h)\equiv0\pmod2
$$
for every $v\in V_t$ and every $h\in[r]\setminus\{t\}$.

Now fix $u\in[r]\setminus\{t\}$. Deleting $H_{2,u}$ does not change the parity of any partite degree of a vertex in $V_t$. Indeed, the basic vertices of every realization corresponding to a tuple in $\mathcal D_u$ lie in $V_u$. Hence, a vertex $v\in V_t$ can occur in such a realization only as an auxiliary vertex. Whenever this occurs, the two neighbours of $v$ in the realization both lie in $V_u$. Thus, deleting this realization decreases the partite degree of $v$ into $V_u$ by two and leaves all its other partite degrees unchanged. Consequently, deleting $H_{2,u}$ preserves the parity of every partite degree of every vertex in $V_t$. Since this holds for every $u\in[r]\setminus\{t\}$, the even partite degrees obtained after deleting $H_{2,t}$ remain even after all of $H_2$ is deleted. As $t\in[r]$ was arbitrary, every partite degree of $G-H_2$ is even; that is,
$$
d_{G-H_2}(v,V_h)\equiv0\pmod2
$$
for every $h\in[r]$ and every $v\in V(G)\setminus V_h$. In particular, every vertex of $G-H_2$ has even degree, since its degree is the sum of its partite degrees. Since $H_2$ is the union of the pairwise edge-disjoint $C_{2k}$-realizations, it admits a $C_{2k}$-decomposition. In particular, $|E(H_2)|$ is divisible by $2k$, and hence
$$
|E(G-H_2)|=|E(G)|-|E(H_2)|\equiv0\pmod{2k}.
$$
 Thus, $G-H_2$ is $C_{2k}$-divisible. Taking $C:=C'$, the proof is complete.
\end{proof}

Let $\Gamma$ be a connected graph. For $x,y\in V(\Gamma)$, let $d_\Gamma(x,y)$ denote the distance between $x$ and $y$, that is, the length of a shortest path from $x$ to $y$ in $\Gamma$. A \emph{spanning tree} of $\Gamma$ is a spanning subgraph of $\Gamma$ that is a tree. The \emph{line graph} $L(\Gamma)$ of $\Gamma$ has vertex set $E(\Gamma)$, where two vertices are adjacent if and only if the corresponding edges of $\Gamma$ have a common endpoint. In particular, $L(K_{[r]})$ is connected and hence has a spanning tree.

\begin{Lemma}\label{lem:pairwise_C2k_divisible}
Let $r\geq3$ and $k\geq2$ be integers, and let $\gamma>0$. Then there exist a constant $C>0$ and an integer $n_0$ such that the following holds. For every integer $n>n_0$, let $G$ be a $C_{2k}$-divisible $r$-partite graph with vertex partition $V_1,\ldots,V_r$ satisfying $|V_t|\geq n$ for every $t\in[r]$ and $\hat{\delta}(G)\geq\frac{1}{2}+\gamma$. If $d_{G}(v,V_t)\equiv0\pmod2$ for every $t\in[r]$ and every $v\in V(G)\setminus V_t$, then there exists a subgraph $H_3$ of $G$ such that
\begin{enumerate}
\item[\textup{(i)}] $H_3$ admits a $C_{2k}$-decomposition and $\hat\Delta(H_3)\leq C$;
\item[\textup{(ii)}] $(G-H_3)[V_i, V_j]$ is $C_{2k}$-divisible for every $1\leq i<j\leq r$.
\end{enumerate}
\end{Lemma}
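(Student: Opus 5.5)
The plan is to fix the residues $|E_G(V_i,V_j)| \bmod 2k$ one pair at a time. We do this by deleting bounded families of $C_{2k}$-realizations from Lemma~\ref{lem:realization}, chosen so that the \emph{combined} effect on every partite degree is even. That keeps all partite degrees even, and each $(G-H_3)[V_i,V_j]$ is then $C_{2k}$-divisible as soon as its edge count is divisible by $2k$. Write $e_{ij}:=|E_G(V_i,V_j)|$. Since every partite degree is even, $e_{ij}=\sum_{v\in V_i}d_G(v,V_j)$ is even for all $i<j$. Moreover $\sum_{i<j}e_{ij}=|E(G)|\equiv0\pmod{2k}$.

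\textbf{Step 1 (elementary move).} Fix pairwise distinct $p_1,p_2,p_3$ and distinct $x,y,w\in V_{p_1}$. Deleting a realization of $(p_1;x,y;p_2,p_3)$ removes $2k-2\equiv-2$ edges from the pair $p_1p_2$ and $2$ edges from $p_1p_3$, all modulo $2k$. It flips the parities of $d(x,V_{p_2}),d(x,V_{p_3}),d(y,V_{p_2}),d(y,V_{p_3})$. It changes every other partite degree by an even amount, because each auxiliary vertex has both neighbours in one part.

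Now take the five tuples
\[
(p_1;x,y;p_2,p_3),\ (p_1;y,w;p_2,p_3),\ (p_1;w,x;p_2,p_3),\ (p_1;x,y;p_3,p_2),\ (p_1;x,y;p_3,p_2).
\]
Each of $x,y,w$ is a basic vertex in an even number of them with the same pair $\{p_2,p_3\}$, so all parity flips cancel. The total number of removed edges is $3(2k-2)+2+2\equiv-2$ on $p_1p_2$ and $6+2(2k-2)\equiv2$ on $p_1p_3$. Hence this move transfers $2$ (mod $2k$) from pair $p_1p_3$ to pair $p_1p_2$, and swapping $p_2,p_3$ transfers it the other way. The move has the same shape for $k=2$. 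A single move could use only three realizations when $k$ is odd, but the five-tuple version works uniformly.

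\textbf{Step 2 (spanning tree).} Fix a spanning tree $T$ of $L(K_{[r]})$ rooted at some pair $\rho$. Two pairs adjacent in $T$ share an index $p_1$, so a move as in Step 1 transfers $2$ between them. Process the non-root pairs from the leaves towards the root. For each pair $e$ with parent $f$, apply the move with suitable orientation at most $k-1$ times, so that the current edge count of $e$ becomes $\equiv0\pmod{2k}$; this is possible since the count is even. Only $e$ and $f$ change, so pairs already processed stay fixed. At the end every non-root pair is $\equiv0$, and the root is also $\equiv0$ because the total is $\equiv0\pmod{2k}$ and deleting $C_{2k}$'s preserves this.

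All moves use fresh basic vertices, say distinct triples in $V_{p_1}$ for distinct moves. The resulting multiset $\mathcal D$ therefore has $|\mathcal D|\le5(k-1)\binom r2$, which is a constant, and every vertex is basic in at most $4$ tuples. Lemma~\ref{lem:realization} with $F=\varnothing$, $K=5k r^2$ and $K'=4$ then yields pairwise edge-disjoint realizations, whose union $H_3$ satisfies $\hat\Delta(H_3)\le C'$. This gives (i). Finally, the parities are preserved by Step 1 and the counts are fixed by Step 2, which gives (ii).

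\textbf{Main obstacle.} The delicate point is Step 1: finding a parity-neutral combination whose net shift is $2$ rather than $4$. Symmetric pairings of realizations give only shifts divisible by $4$, which would fail when $k$ is even. I would check the displayed five-tuple arithmetic carefully first, and also check that a single realization really does change no partite degree other than the four basic ones by an odd amount.
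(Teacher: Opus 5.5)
Your proposal is correct and follows essentially the same route as the paper. The shared ingredients are a spanning tree of $L(K_{[r]})$ processed from the leaves towards the root, and tuples whose realizations meet only a pair and its parent. To shift a residue by $2$ while keeping all partite degrees even, both use a triangle of basic vertices plus two reversed copies of one tuple; this is exactly the paper's gadget for $\lambda_{ij}=2$. The root pair is then settled by $|E(G-H_3)|\equiv 0 \pmod{2k}$.

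The differences are minor. You repeat the single five-tuple move instead of the paper's case split on $\lambda_{ij}\bmod 4$. You also apply Lemma~\ref{lem:realization} once with $F=\varnothing$ rather than layer by layer with accumulated $F_s$. That is legitimate, because each realization's contribution to every part-pair is determined by its tuple alone, so all residues can be computed before any realization is chosen.
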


\begin{proof}
Let $T$ be a spanning tree of the line graph $L(K_{[r]})$. We identify each vertex of $T$ with the corresponding $2$-subset of $[r]$, and write $ij$ for the vertex corresponding to the edge $\{i,j\}$ of $K_{[r]}$. Fix a vertex $i_0j_0\in V(T)$. Let $m:=\max_{ij\in V(T)}d_T(ij,i_0j_0)$, and, for $s=0,1,\ldots,m$, let
$$
L_s:=\{ij\in V(T):d_T(ij,i_0j_0)=s\}.
$$
Then the nonempty sets $L_0,L_1,\ldots,L_m$ form a partition of $V(T)$.

We construct pairwise edge-disjoint subgraphs $Q_m,Q_{m-1},\ldots,Q_1$ of $G$ successively in this order; their union will be $H_3$. Suppose that, for some $1\leq s\leq m$, the subgraphs $Q_m,\ldots,Q_{s+1}$ have already been constructed, with no subgraph constructed yet when $s=m$. Let
$$
F_s:=\bigcup_{\ell=s+1}^{m}Q_\ell, \quad \text{and} \quad \mathcal{L}_s:=\bigcup_{\ell=s+1}^{m}L_\ell,
$$
where $F_s$ is an empty graph and $\mathcal{L}_s=\varnothing$ if $s=m$. Let $G^s:=G-F_s$. We maintain that every partite degree of $G^s$ is even, that
$|E_{G^s}(V_i,V_j)|\equiv0\pmod{2k}$ for every $ij\in\mathcal{L}_s$, that $|E(G^s)|\equiv0\pmod{2k}$, and that $\hat\Delta(F_s)\leq C_s$, where $C_s$ is a constant independent of $n$. These properties hold when $s=m$, since $G^m=G$ has even partite degrees and $|E(G^m)|\equiv0\pmod{2k}$ by hypothesis, $\mathcal{L}_m=\varnothing$, and $F_m$ is an empty graph of $G$; we may therefore take $C_m:=0$.

For each vertex in $L_s$, write it as $ij$ and its unique neighbour in $L_{s-1}$ as $ih$, where $i$ is their common endpoint. Since every partite degree of $G^s$ is even, $|E_{G^s}(V_i,V_j)|$ is even. Let $\lambda_{ij}$ be the unique even integer with $0\leq\lambda_{ij}\leq2k-2$ such that $ |E_{G^s}(V_i,V_j)|-\lambda_{ij}\equiv0\pmod{2k}$. Choose distinct vertices $x_1,x_2,y_1,y_2,y_3\in V_i$, depending on $ij$. Let $\mathcal D_{(i,j,h)}$ be a multiset of tuples defined as follows. If $\lambda_{ij}\equiv0\pmod4$, take $\lambda_{ij}/2$ copies of the tuple $(i;x_1,x_2;h,j)$. If $\lambda_{ij}\equiv2\pmod4$ and $\lambda_{ij}>2$, take the three tuples $(i;y_1,y_2;h,j), (i;y_2,y_3;h,j), (i;y_3,y_1;h,j)$, together with $\lambda_{ij}/2-3$ copies of the tuple $(i;x_1,x_2;h,j)$. If $\lambda_{ij}=2$, take the three tuples $(i;y_1,y_2;h,j), (i;y_2,y_3;h,j), (i;y_3,y_1;h,j)$, together with two copies of the tuple $(i;x_1,x_2;j,h)$. Let $\mathcal D_s$ be the multiset union of $\mathcal D_{(i,j,h)}$ over all $ij\in L_s$. For each $ij\in L_s$, the number of tuples in $\mathcal D_{(i,j,h)}$ is less than $3k$. Hence
$$ |\mathcal D_s|\leq3k|L_s|\leq3k\binom{r}{2},$$
and every vertex of $G$ occurs as a basic vertex in at most $3k\binom{r}{2}$ members of $\mathcal D_s$. By choosing $n_0$ sufficiently large, we may apply Lemma \ref{lem:realization} at each step. Applying Lemma~\ref{lem:realization} with
$$ K=K'=3k\binom{r}{2},\qquad F=F_s\quad (\hat\Delta(F_s)\leq C_s),$$
we obtain pairwise edge-disjoint $C_{2k}$-realizations in $G-F_s=G^s$, one for each member of $\mathcal D_s$. Let $Q_s$ be the union of these realizations. By Lemma \ref{lem:realization}, there exists a constant $C'_s$ such that $\hat\Delta(Q_s)\leq C'_s$.

A $C_{2k}$-realization of a tuple $(i;x,y;h,j)$ contains $2k-2$ edges between $V_i$ and $V_h$ and two edges between $V_i$ and $V_j$, whereas a realization of $(i;x,y;j,h)$ contains two edges between $V_i$ and $V_h$ and $2k-2$ edges between $V_i$ and $V_j$. Hence, by the definition of $\mathcal D_{(i,j,h)}$,
$$|E_{Q_s}(V_i,V_j)|\equiv\lambda_{ij}\pmod{2k}$$
for every $ij\in L_s$. Moreover, every partite degree of $Q_s$ is even. Indeed, every auxiliary vertex has two neighbours in the same vertex part. For the basic vertices, if $\lambda_{ij}\equiv0\pmod4$, then each of $x_1,x_2$ occurs as a basic vertex an even number of times. If $\lambda_{ij}\equiv2\pmod4$ and $\lambda_{ij}>2$, then each of $y_1,y_2,y_3$ occurs twice, while each of $x_1,x_2$ occurs $\lambda_{ij}/2-3$ times, which is even. If $\lambda_{ij}=2$, then each of $y_1,y_2,y_3$ occurs twice and each of $x_1,x_2$ occurs twice. Thus every basic vertex also has even partite degrees in $Q_s$. Furthermore, every realization used in the construction of $Q_s$ has edges only in the two part-pairs corresponding to a vertex $ij\in L_s$ and its unique neighbour $ih\in L_{s-1}$. Therefore, $Q_s$ has no edge in any part-pair corresponding to a vertex in $L_m\cup\cdots\cup L_{s+1}$.

Let $G^{s-1}:=G^s-Q_s$, $F_{s-1}:=F_s\cup Q_s$, $C_{s-1}:=C_s+C'_s$. Then $\hat\Delta(F_{s-1})\leq C_s+C'_s=C_{s-1}$. Since every partite degree of both $G^s$ and $Q_s$ is even, every partite degree of $G^{s-1}$ is even. For every $ij\in L_s$, by the definition of $\lambda_{ij}$ and the construction of $Q_s$, we have
$$ |E_{G^{s-1}}(V_i,V_j)| = |E_{G^s}(V_i,V_j)|-|E_{Q_s}(V_i,V_j)| \equiv0\pmod{2k}.$$
Moreover, $Q_s$ has no edge in any part-pair corresponding to a vertex in $\mathcal L_s=L_m\cup\cdots\cup L_{s+1}$. Hence the number of edges of $G^{s-1}$ in every such part-pair is unchanged and remains divisible by $2k$. Since $\mathcal L_{s-1}=L_s\cup\mathcal L_s$, we have $|E_{G^{s-1}}(V_i,V_j)|\equiv0\pmod{2k}$ for every $ij\in\mathcal L_{s-1}$. Finally, since $Q_s$ is a union of edge-disjoint copies of $C_{2k}$,
$$ |E(G^{s-1})| = |E(G^s)|-|E(Q_s)| \equiv0\pmod{2k}. $$
Thus, all the required properties are maintained with $s$ replaced by $s-1$.

Repeating this argument for $s=m,m-1,\ldots,1$, we obtain pairwise edge-disjoint subgraphs $Q_m,Q_{m-1},\ldots,Q_1$. Let $H_3:=\bigcup_{s=1}^{m}Q_s$ and $G^0:=G-H_3$. Then every partite degree of $G^0$ is even, $|E(G^0)|\equiv0\pmod{2k}$, and $|E_{G^0}(V_i,V_j)| \equiv 0 \pmod{2k}$ for every $ij\in L_m\cup\cdots\cup L_1$. It remains to consider the part-pair corresponding to the vertex $i_0j_0$ in $L_0$. Since $H_3$ is a union of edge-disjoint copies of $C_{2k}$, the graph $H_3$ admits a $C_{2k}$-decomposition. Since the number of edges in every part-pair other than $(V_{i_0},V_{j_0})$ is divisible by $2k$ and $|E(G^0)|\equiv0\pmod{2k}$, the number of edges between $V_{i_0}$ and $V_{j_0}$ is also divisible by $2k$. Therefore, $|E_{G^0}(V_i,V_j)| \equiv0 \pmod{2k}$ for every $1\leq i<j\leq r$. Since every partite degree of $G^0$ is even, $G^0[V_i, V_j]$ is $C_{2k}$-divisible for every $1\leq i<j\leq r$. Finally, $H_3=F_0$, and thus $\hat\Delta(H_3)=\hat\Delta(F_0)\leq C_0$. Taking $C:=C_0$, both \textup{(i)} and \textup{(ii)} follow.
\end{proof}

\subsection{Proof of Theorem \ref{thm:main}}\label{sec:main}

\begin{proof}[\bf Proof of Theorem \ref{thm:main}]
By Proposition \ref{prop:lower}, we have $\hat\delta^{r}_{C_{2k}} \geq \alpha_k$, so it remains to prove $\hat\delta^{r}_{C_{2k}} \leq \alpha_k$. Let $\varepsilon>0$. If $r=2$, the result follows directly from Theorem \ref{thm:Taylor}. Hence, we may assume that $r\geq3$. Set $\gamma:=\alpha_k-\frac{1}{2}+\frac{\varepsilon}{2}$, so that $\gamma>0$ and $\frac12+\gamma=\alpha_k+\frac{\varepsilon}{2}$. Apply Lemmas \ref{lem:part_pair_even}, \ref{lem:partite_degree_even}, and \ref{lem:pairwise_C2k_divisible} with this value of $\gamma$, and denote the resulting constants by $(C_1,n_1)$, $(C_2,n_2)$, and $(C_3,n_3)$, respectively, where $C_1=2$. By Theorem \ref{thm:Taylor}, there exists an integer $n_4$ such that every $C_{2k}$-divisible $2$-balanced bipartite graph $G'$ with minimum vertex-part size greater than $n_4$ and $\hat{\delta}(G')>\alpha_k+\frac{\varepsilon}{2}$ admits a $C_{2k}$-decomposition. Choose $n_0$ sufficiently large so that
$$
n_0>\max\{n_1,n_2,n_3,n_4\}
\quad\text{and}\quad
\frac{C_1+C_2+C_3}{n_0}<\frac{\varepsilon}{2}.
$$

Let $n>n_0$, and let $G$ be a $C_{2k}$-divisible $r$-partite graph with vertex partition $V_1,\ldots,V_r$ such that $n\leq |V_t|\leq 2n$ for every $t\in[r]$, and $\hat{\delta}(G)\geq\alpha_k+\varepsilon$. Since $\alpha_k+\varepsilon=\frac{1}{2}+\gamma+\frac{\varepsilon}{2}$, we have $\hat{\delta}(G)\geq\frac{1}{2}+\gamma+\frac{\varepsilon}{2}$. By Lemma~\ref{lem:part_pair_even}, there exists a subgraph $H_1$ of $G$ such that $H_1$ admits a $C_{2k}$-decomposition, $\hat\Delta(H_1)\leq C_1$, the graph $G_1:=G-H_1$ is $C_{2k}$-divisible, and $|E_{G_1}(V_i,V_j)|\equiv0\pmod2$ for every $1\leq i<j\leq r$. Moreover,
$$\hat{\delta}(G_1)\geq\hat{\delta}(G)-\frac{C_1}{n} > \frac{1}{2}+\gamma.$$
Hence, by Lemma~\ref{lem:partite_degree_even}, there exists a subgraph $H_2$ of $G_1$ such that $H_2$ admits a $C_{2k}$-decomposition, $\hat\Delta(H_2)\leq C_2$, the graph $G_2:=G_1-H_2$ is $C_{2k}$-divisible, and $d_{G_2}(v,V_t)\equiv0\pmod2$ for every $t\in[r]$ and every $v\in V(G_2)\setminus V_t$. Furthermore,
$$\hat{\delta}(G_2)\geq\hat{\delta}(G)-\frac{C_1+C_2}{n}>\frac{1}{2}+\gamma.$$
Therefore, by Lemma~\ref{lem:pairwise_C2k_divisible}, there exists a subgraph $H_3$ of $G_2$ such that $H_3$ admits a $C_{2k}$-decomposition, $\hat\Delta(H_3)\leq C_3$, and, letting $G_3:=G_2-H_3$, the bipartite graph $G_3[V_i, V_j]$ is $C_{2k}$-divisible for every $1\leq i<j\leq r$. Furthermore,
$$ \hat{\delta}(G_3) \geq \hat{\delta}(G)-\frac{C_1+C_2+C_3}{n} > \alpha_k+\frac{\varepsilon}{2}. $$
For every $1\leq i<j\leq r$, the graph $G_3[V_i,V_j]$ is $2$-balanced, has minimum vertex-part size at least $n>n_4$, and satisfies $\hat{\delta}(G_3[V_i,V_j])\geq\hat{\delta}(G_3)>\alpha_k+\frac{\varepsilon}{2}$. Thus, by Theorem \ref{thm:Taylor}, $G_3[V_i, V_j]$ admits a $C_{2k}$-decomposition for every $1\leq i<j\leq r$.

Since the edge sets of $H_1,H_2,H_3$ and of $G_3[V_i,V_j]$, $1\leq i<j\leq r$, form a partition of $E(G)$, their $C_{2k}$-decompositions together give a $C_{2k}$-decomposition of $G$.
\end{proof}

\section{Concluding remarks}

Let $k\geq2$ be an integer. A \emph{group divisible design} is a triple $(X,\mathcal G,\mathcal B)$, where $X$ is a finite set of points, $\mathcal G$ is a partition of $X$ into subsets called \emph{groups}, and $\mathcal B$ is a collection of $k$-subsets of $X$ (called \emph{blocks}), such that every pair of distinct points of $X$ is contained either in exactly one group or in exactly one block, but not both. If $\mathcal G$ consists of $a_i$ groups of size $n_i$ for each $i=1,\ldots,s$, then such a design is called a $k$-GDD of \emph{type} $n_1^{a_1}n_2^{a_2}\cdots n_s^{a_s}$. Equivalently, a $k$-GDD of type $n_1^{a_1}n_2^{a_2}\cdots n_s^{a_s}$ is a $k$-clique decomposition of the complete multipartite graph whose vertex set is partitioned into $a_i$ parts of size $n_i$ for each $i=1,\ldots,s$. The existence theory of $k$-GDDs is much more developed for uniform group types than for nonuniform group types. The analogous notion for cycles is a $C_k$-GDD, which is a $C_k$-decomposition of the complete multipartite graph (see \cite{CB,CNT,WJC}). Theorem \ref{thm:main} provides an asymptotic decomposition result for dense partial $C_{2k}$-GDDs: it determines the minimum partite-degree threshold that guarantees a $C_{2k}$-decomposition of an $r$-partite graph, provided that the graph is $C_{2k}$-divisible. In particular, the result applies to nonuniform group sizes, with $n\leq n_i\leq2n$ for every $i\in[r]$.

Taylor~\cite{Taylor} proved that, for sufficiently large $n$, every $C_4$-divisible graph on $n$ vertices with minimum degree at least $\frac{2}{3}n-1$ admits a $C_4$-decomposition, and, for every $k\geq4$,
every sufficiently large $C_{2k}$-divisible graph with minimum degree at least $\frac{1}{2}n$ admits a $C_{2k}$-decomposition. These results give exact minimum-degree thresholds, whereas Theorem~\ref{thm:main} determines the corresponding partite-degree threshold only asymptotically. It is therefore natural to ask whether the latter can likewise be made exact, that is, whether the $\varepsilon$ in the minimum partite-degree condition can be removed.

In the bipartite setting, Cavenagh \cite{Cavenagh} proved that if $G$ is a $C_4$-divisible bipartite graph with vertex parts $X$ and $Y$, where $|Y|$ is even and every vertex of $Y$ has degree at least $\frac{95}{96}|X|$, then $G$ admits a $C_4$-decomposition. This result does not require the two vertex parts to have comparable sizes and imposes no corresponding lower bound on the degrees of the vertices in $X$. It would be interesting to extend this result to $r$-partite graphs.

\section*{Declaration of competing interest}

The authors declare that they have no known competing financial interests or personal relationships that could have appeared to influence the work reported in this paper.

\section*{Data availability}

Data sharing is not applicable to this article as no datasets were generated or analyzed during the current study.

\end{document}